\def\id{\mathop{\rm id}\nolimits}
\def\0D{\Delta^{(0)}}
\def\1D{\Delta^{(1)}}
\newcommand{\longto}{\longrightarrow}
\newtheorem{theorem}{Theorem}[section]
\newtheorem{remark}[theorem]{Remark}
\newtheorem{proposition}[theorem]{Proposition}
\newtheorem{lemma}[theorem]{Lemma}
\newtheorem{corollary}[theorem]{Corollary}
\newtheorem{example}[theorem]{Example}
\newtheorem{definition}[theorem]{Definition}
\def\build#1_#2^#3{\mathrel{
\mathop{\kern 0pt#1}\limits_{#2}^{#3}}}
\numberwithin{equation}{section}
\def\part{\partial}
\def\text{\hbox}
\def\build#1_#2^#3{\mathrel{
\mathop{\kern 0pt#1}\limits_{#2}^{#3}}}
\numberwithin{equation}{section}
\newcommand{\comment}[1]{\relax}
\title{\vspace*{-15mm} The covariant functoriality of graph algebras}
\author[P. M.~Hajac]{Piotr M.~Hajac}
\address[P. M.~Hajac]{Instytut Matematyczny, Polska Akademia Nauk, ul. \'Sniadeckich 8, Warszawa, 00-656 Poland}
\email{pmh@impan.pl }
\author[M.~Tobolski]{Mariusz Tobolski}
\address[M.~Tobolski]{Instytut Matematyczny, Uniwersytet Wroc\l{}awski, pl. Grunwaldzki~2, Wroc\l{}aw, 50-384 Poland}
\email{mariusz.tobolski@math.uni.wroc.pl}
\begin{document}
\baselineskip15pt
\begin{abstract}
In the standard category of directed graphs, graph morphisms map edges to edges. By allowing graph morphisms 
to map edges to finite paths (path homomorphisms of graphs), we obtain  an ambient category in which we determine subcategories 
enjoying covariant
functors to categories of algebras given by constructions of path algebras, Cohn path algebras, and Leavitt path algebras,
respectively. Thus we obtain new tools to unravel homomorphisms between Leavitt path algebras and between graph C*-algebras.
In particular, a graph-algebraic presentation of the inclusion of the C*-algebra of a quantum real projective plane into the Toeplitz
algebra allows us to determine a quantum CW-complex structure of the former. It comes as a mixed-pullback theorem where
two $*$-homomorphisms are covariantly induced from path homomorphisms of graphs and the remaining two are contravariantly 
induced by admissible inclusions of graphs. As a main result and an application of new covariant-induction tools, 
we prove such a mixed-pullback
theorem for arbitrary graphs whose all vertex-simple loops have exits, which  substantially enlarges the scope of examples coming 
from noncommutative topology. 
\end{abstract}
\subjclass{Primary 16S88, Secondary 46L80, 46L85}
\keywords{Path algebras, Cohn path algebras, Leavitt path algebras, covariant functors,
pullbacks of graph C*-algebras, noncommutative topology}
\maketitle

\section{Introduction}
\noindent
The standard category of directed graphs and graph homomorphism found spectacular application in solving problems concerning 
finitely generated free groups~\cite{s-jr83}. Herein, prompted by interesting examples and the flexibility afforded by the Cuntz--Krieger relations~\cite{ck-80}, 
we consider a much larger category of directed
graphs whose morphisms are path homomorphisms of graphs. The main idea is that path homomorphisms need not preserve the lengths 
of paths, which is well suited to study $*$-homomorphisms between graph C*-algebras \cite{bprs-00,flr-00} that are not gauge equivariant.  

To systematically understand different layers of the covariant induction~\cite{s-j02}, first we observe that the construction of a path algebra defines a covariant functor
on the subcategory given by path homomorphisms that are injective on vertices. Then we propose a new monotonicity condition for
such path homomorphisms (generalizing \cite[Lemma~3.3(1)]{cht21}), and prove that it defines a further restricted subcategory which yields a covariant functor 
via the construction
of a Cohn path algebra~\cite{c-pm66}. Finally, we unravel a general regularity condition, vastly generalizing its earlier incarnation~\cite[Section~2.3]{kr-g09} (cf.~\cite[Definition~1.6.2]{aasm17}),
custom-designed to handle the second-type Cuntz--Krieger relation, and thus arrive at the subcategory bringing about a~covariant functor via
the contruction of a Leavitt path algebra and its enveloping graph C*-algebra. 

As a key application, we use the aforementioned covariant functor into graph C*-algebras to prove a mixed-pullback theorem (Theorem~\ref{main})
substantially generalizing the mixed-pullback theorem~\cite[Theorem~3.4]{cht21}. In particular, we replace the acyclicity assumption in the latter by ``all vertex-simple 
loops have exits'' in the former (cf.~ \cite[Theorem~2]{flr-00}). We instantiate our theorem by the graph of a quantum projective plane \cite{h-pm96,hrs-03,hs02}, which has a vertex-simple loop
with an exit. The thus obtained pullback
structure of its C*-algebra can be interpreted as a quantum CW-complex structure~\cite{dhmsz-20}.

\section{Preliminaries}

\subsection{The standard category of graphs}

A (directed) {\em graph} is a quadruple $E:=(E^0,E^1,s_E,t_E)$, where:
\begin{itemize}
\item $E^0$ is the set of {\em vertices},
\item $E^1$ is the set of {\em edges},
\item $E^1\overset{s_E}{\to}E^0$ is the {\em source} map assigning to each edge its beginning,
\item $E^1\overset{t_E}{\to}E^0$ is the {\em target} (range) map assigning to each edge its end.
\end{itemize}

Let $v$ be a vertex in a graph $E$. It is called a {\em sink} iff $s^{-1}_E(v)=\emptyset$,  it is called a {\em source} iff~$t^{-1}_E
(v)=\emptyset$, and it is called {\em regular} iff it is not a sink and $|s_E^{-1}(v)|<\infty$. The subset of regular vertices of a graph 
$E$ is denoted by ${\rm reg}(E)$. A {\em finite path} $p$ in $E$ is a vertex or a finite collection $e_1\ldots e_n$ of edges satisfying 
\begin{equation}
t_E(e_1)=s_E(e_2),\quad t_E(e_2)=s_E(e_3),\quad \ldots,\quad t_E(e_{n-1})=s_E(e_n).
\end{equation} 
We denote the set of all finite paths in $E$ by~$FP(E)$. The beginning $s_{PE}(p)$ of $p$ is $s_E(e_1)$ and the end $t_{PE}(p)$ of $p$ is $t_E(e_n)$. The 
beginning and the end of a vertex is the vertex itself. We thus extend the source and target maps to maps $
s_{PE},\,t_{PE}\colon FP(E)\rightarrow E^0$.
 Vertices are considered as finite paths of length~$0$. Every edge is a path of 
length~$1$. The {\em length} of a finite path that is not a vertex is the size of the tuple. The set of all paths in $E$ of length $n$
is denoted by~$FP_n(E)$. 
A {\em loop} is a positive-length path that begins and  ends at the same vertex.

\begin{definition}
A {\em homomorphism} from a graph $E:=(E^0,E^1,s_E,t_E)$ to a graph\linebreak $F:=(F^0,F^1,s_F,t_F)$
is a pair of maps 
\begin{equation*}
(f^0:E^0\to F^0,f^1:E^1\to F^1)
\end{equation*} 
satisfying the conditions:
\begin{equation*} 
s_F\circ f^1=f^0\circ s_E\,,\qquad t_F\circ f^1=f^0\circ t_E\,.
\end{equation*}
We call $(f_0,f_1)$ {\em injective} or {\em surjective} if both $f_0$ and $f_1$ are injective or surjective, respectively.
\end{definition}
\noindent One can easily check that graphs with homomorphisms of graphs form a category. We refer to this category as the 
\emph{standard category of graphs}.

\subsection{Different types of graph algebras}
Let $E$ be a graph and $R_E$ be the set of all subsets of~$FP(E)$. Equiping the latter with
the set union $\cup$ as addition and the concatenation of paths as multiplication renders $R_E$ a unital
semiring. Here the neutral element for addition is $\emptyset$ and the neutral element for multiplication
is~$E^0$. Restricting this structure to the set $R^f_E$ of all finite subsets of~$FP(E)$ yields a~subsemiring
of~$R_E$. The semiring $R^f_E$ is unital if and only if $|E^0|<\infty$, in which case the unit is~$E^0$.

Next, let $E$ be a non-empty graph and let $k$ be any field. 
The following construction can be thought of as a linearization of~$R^f_E$. 
Consider the vector space
\begin{equation}
kE:=\{f\in{\rm Map}(FP(E),k)~|~f(p)\neq 0\text{ for finitely many } p\in FP(E)\},
\end{equation}
where the addition and scalar multiplication are pointwise. Then the set of functions $\{\chi_p\}_{p\in FP(E)}$
given by
\begin{equation}
\chi_p(q):=\delta_{p,q}:=\begin{cases}1 &\text{for }p=q\\0 & \text{otherwise}\end{cases}
\end{equation}
is a linear basis of $kE$. By checking the associativity, one can prove that the formulas
\begin{equation}
m:kE\times kE\longto kE,\qquad m(\chi_p,\chi_q):=\begin{cases}\chi_{pq} & \text{if }t(p)=s(q)\\0 & \text{otherwise}\end{cases}.
\end{equation}
define a multiplication on~$kE$. 
\begin{definition}{{\rm (\cite[Definition~II.1.2]{ass-06})}}
Let $E$ be a non-empty graph. 
The above constructed algebra $(kE,+,0,m)$ is called the {\em path algebra} of~$E$.
If $E=\emptyset$, then $kE:=0$.
\end{definition}

To define Cohn and Leavitt path algebras, we need extended graphs.
The {\em extended graph} $\bar{E}:=(\bar{E}^0, \bar{E}^1,s_{\bar{E}},t_{\bar{E}})$
of the graph $E$ is given as follows:
\begin{gather}
\bar{E}^0:=E^0,\quad \bar{E}^1:=E^1\sqcup (E^1)^*,\quad (E^1)^*:=\{e^*~|~e\in E^1\},
\nonumber\\
\forall\; e\in E^1:\quad s_{\bar{E}}(e):=s_E(e),\quad t_{\bar{E}}(e):=t_E(e),
\nonumber\\
\forall\; e^*\in (E^1)^*:\quad s_{\bar{E}}(e^*):=t_E(e),\quad t_{\bar{E}}(e^*):=s_E(e).
\end{gather}
Given a path $p=e_1e_2\ldots e_n\in FP(E)$, we put $p^*:=e_n^*\ldots e_1^*$ for the corresponding path in~$FP(\overline{E})$. 
%Using the extended graph construction, we define Cohn path algebras and Leavitt path algebras.
\begin{definition}{\rm(\cite[Definition~1.5.9]{aasm17})}\label{relcohndef}
Let $E$ be a graph, $k$ be a field, and $X$ be a subset of the set $\mathrm{reg}(E)$ 
of all regular vertices in~$E$. The {\em relative Cohn path algebra} $C_k^X(E)$ of $E$
is the path algebra $k\bar{E}$ of the extended graph $\bar{E}$ divided by the ideal generated
by the union of the following sets:
\vspace*{-2mm}
\begin{enumerate}
\item[{\rm (1)}] $\{\chi_{e^*}\chi_f-\delta_{e,f}\chi_{t_E(e)}~|~e,f\in E^1\}$,
\item[{\rm (2)}] $\{\sum_{e\in s^{-1}_E(v)}\chi_{e}\chi_{e^*}-\chi_v~|~v\in X\}$.
\end{enumerate}
\vspace*{-2mm}
The relative Cohn algebra $C_k^\emptyset(E)=:C_k(E)$ is called the {\em Cohn path algebra}
 of~$E$, and the relative Cohn algebra $C_k^{\mathrm{reg}(E)}(E)=:L_k(E)$
 is called the {\em Leavitt path algebra} of~$E$.
\end{definition}
\noindent 
The algebra $C_k^X(E)$ is isomorphic to the universal algebra generated by the elements 
$\chi_v$, $\chi_e$, $\chi_{e^*}$, $v\in E^0$, $e\in E^1$, subject to the above relations (1) and (2) and
 the standard path-algebraic relations $\chi_v\chi_w=\delta_{v,w}\chi_v$, $v,w\in E^0$, and $\chi_{s_E(e)}\chi_e=\chi_e=\chi_e\chi_{t_E(e)}$, $e\in E^1$.

%One can prove (e.g., see~\cite[Lemma~1.2.12(iii)]{aasm17}) that 
%\begin{equation}\label{lspan}
%L_k(E)= {\rm span}\{\chi_p\chi_{q^*}~|~p,q\in FP(E),~t(p)=t(q)\}.
%\end{equation}
In the case $k=\mathbb{C}$, one defines an anti-linear involution on $L_\mathbb{C}(E)$  by the formulas
\begin{equation}
([\chi_v])^*:=[\chi_v],\quad ([\chi_e])^*:=[\chi_{e^*}],\quad ([\chi_{e^*}])^*:=[\chi_e],\quad v\in E^0,\quad e\in E^1,
\end{equation}
to render  $L_\mathbb{C}(E)$ a complex $*$-algebra.
This brings us to the following key definition:
\begin{definition}{{\rm (\cite[Definition~5.2.1]{aasm17})}}\label{graphc*}
Let $E$ be a graph. The {\em graph C*-algebra} $C^*(E)$ of $E$ is 
the universal enveloping C*-algebra of the complex $*$-algebra $L_\mathbb{C}(E)$.
\end{definition}
\noindent 
Definition~\ref{graphc*} is equivalent to~\cite[Definition~1]{flr-00} defining $C^*(E)$ as the universal C*-algebra 
generated by mutually orthogonal projections $P_v$, $v\in E^0$, and partial isometries $S_e$, $e\in E^1$, with mutually 
orthogonal ranges, satisfying
\begin{enumerate}
\item $S^*_eS_e=P_{t_E(e)}$ for all $e\in E^1$,
\item $P_v=\sum_{e\in s^{-1}_E(v)}S_eS_e^*$ for all $v\in {\rm reg}(E)$,
\item $S_eS_e^*\leq P_{s_E(e)}$ for all $e\in E^1$.
\end{enumerate}
In what follows, we will need the notaion $S_p:=S_{e_1}S_{e_2}\ldots S_{e_n}$,
 where $p=e_1e_2\ldots e_n$ is a finite path in~$E$, and $S_v:=P_v$, where $v\in E^0$. 

\section{Subcategories in the category of graphs and path morphisms of graphs}
\noindent
Let us now consider a more general category of graphs. 
\begin{definition}
Let $E$ and $F$ be graphs.
A {\em path homomorphism} from $E$ to $F$  is a map\linebreak \mbox{$f\colon FP(E)\to FP(F)$} satisfying:
\begin{enumerate}
\item
$f(E^0)\subseteq F^0$,
\item
$s_{PF}\circ f=f\circ s_{PE}\,,\quad t_{PF}\circ f=f\circ t_{PE}\,$,
\item
$\forall\;p,q\in FP(E)\text{ such that } t_{PE}(p)=s_{PE}(q)\colon f(pq)=f(p)f(q)$.
\end{enumerate}
\end{definition}
\noindent If $(f^0,f^1)\colon E\to F$ is a homomorphism of graphs, then $f\colon FP(E)\to FP(F)$ defined by
\begin{gather}
\forall\; v\in E^0\colon f(v):=f^0(v),\quad
\forall\; e\in E^1\colon f(e):=f^1(e),\\ \nonumber
\forall\; e_1\ldots e_n\in FP(E)\colon f(e_1\ldots e_n):= f^1(e_1)\ldots f^1(e_n)\in FP(F),
\end{gather}
is a path homomorphism of graphs. The thus induced path homomorphism of graphs preserves the length of
paths.  Vice versa, if a path homomorphism of graphs preserves the length of
paths, then it is induced by a homomorphism of graphs. Therefore, if a path homomorphism of graphs maps
edges to paths longer than one or to vertices, then it 
does not come from a homomorphism of graphs. Note also that if we treat a graph as a small category, where vertices are objects and 
finite paths are morphisms between them, then  path homomorphisms are  functors between these categories.
Therefore, it is clear that path homomorphisms yield a category:
\begin{definition}
We denote the category of graphs and path homomorphisms by $PG$ and its subcategory of graphs and path homomorphisms which 
are injective  on the set of vertices by~$IPG$. The subcategory of $IPG$ obtained by restricting objects to graphs with finitely many 
vertices and morphisms to path homomorphisms that are bijective on vertices is denoted by~$BPG$.
\end{definition}

%\subsection{The subcategory $MIPG$.}
To deal with the condition Definition~\ref{relcohndef}(1), we introduce the following relation on the set of finite paths:
\begin{equation}\label{popaths}
\alpha\preceq\beta\iff\exists\;\gamma\in FP(E)\colon \beta=\alpha\gamma.
\end{equation}
Note that $\preceq$ is a partial order. Indeed, reflexivity holds because $\alpha=\alpha t_E(\alpha)$,
transitivity holds because
\begin{equation}
(\beta=\alpha\gamma\;\text{and}\;\delta=\beta\gamma')\quad\Longrightarrow\quad
\delta=\alpha\gamma\gamma',
\end{equation}
and antisymmetry holds because
\begin{equation*}
(\beta=\alpha\gamma\;\text{and}\;\alpha=\beta\gamma')\quad\Rightarrow\quad
\beta=\beta\gamma'\gamma\quad\Rightarrow\quad \gamma'=t_{PE}(\beta)=\gamma
\quad\Rightarrow\quad \alpha=\beta t_{PE}(\beta)=\beta .
\end{equation*}
\begin{lemma}\label{mlemma}
Restricting morphisms in the category $IPG$ to the morphisms satisfying the {\em monotonicity} condition
\begin{equation}\label{mcondition}
f(e)\preceq f(e')\quad\Longrightarrow\quad e=e',\quad%\text{ {\rm (}see \eqref{popaths}{\rm )} }
\end{equation}
where $e$ and $e'$ are edges, yields a subcategory of~$IPG$.
\end{lemma}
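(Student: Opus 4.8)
The plan is to verify the two properties that a subclass of morphisms of $IPG$ must have in order to span a subcategory on the same objects: that it contains every identity morphism, and that it is closed under composition. Containment of identities is immediate, since the identity path homomorphism sends each edge $e$ to itself, and for edges $e,e'$ the relation $e\preceq e'$ means $e'=e\gamma$, which by comparing lengths forces $\gamma$ to be a vertex and hence $e=e'$, so \eqref{mcondition} holds. Because $IPG$ is already a category, all the substance lies in showing that if $f\colon E\to F$ and $g\colon F\to G$ both satisfy \eqref{mcondition}, then so does $g\circ f$.

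To this end I would fix edges $e,e'\in E^1$ with $g(f(e))\preceq g(f(e'))$ and aim to prove $e=e'$. First I would record that $f(e)$ and $f(e')$ start at the same vertex: since $\preceq$-comparable paths share a source, applying $s_{PG}$ and using $s_{PG}\circ g=g\circ s_{PF}$ together with $s_{PF}\circ f=f\circ s_{PE}$ gives $g(f(s_E(e)))=g(f(s_E(e')))$, and injectivity of $g$ and then of $f$ on vertices yields $s_E(e)=s_E(e')$. Thus $f(e)$ and $f(e')$ are paths in $F$ with a common initial vertex $u:=f(s_E(e))$.

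The crux is the remark that it suffices to show $f(e)$ and $f(e')$ are \emph{comparable} for $\preceq$, because the monotonicity condition \eqref{mcondition} for $f$ says exactly that $f(e)\preceq f(e')$ forces $e=e'$, while its symmetric form $f(e')\preceq f(e)$ forces $e'=e$. To establish comparability I would argue at the first point of divergence. If either $f(e)$ or $f(e')$ equals the vertex $u$, comparability is automatic, so I may assume both have positive length and write them as edge strings $f(e)=a_1\cdots a_m$ and $f(e')=b_1\cdots b_n$. Were they incomparable, there would be a least index $i$ with $a_1\cdots a_{i-1}=b_1\cdots b_{i-1}$ but $a_i\neq b_i$; here $a_i$ and $b_i$ emanate from a common vertex. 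Applying $g$ and cancelling the common (possibly empty) prefix $g(a_1\cdots a_{i-1})$ from both sides of $g(f(e))\preceq g(f(e'))$ leaves $g(a_i\cdots a_m)\preceq g(b_i\cdots b_n)$, so that $g(a_i)$ and $g(b_i)$ are both prefixes of $g(b_i\cdots b_n)$ and are therefore $\preceq$-comparable, prefixes of a fixed path being linearly ordered by length. Condition \eqref{mcondition} for $g$ then forces $a_i=b_i$, contradicting the choice of $i$.

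The step I expect to be the main obstacle is exactly this comparability argument, because the tempting shortcut — reflecting the order through $g$, that is, deducing $f(e)\preceq f(e')$ straight from $g(f(e))\preceq g(f(e'))$ — is in fact \emph{false}: a monotone $g$ may collapse an edge to a vertex, so $g(\alpha)\preceq g(\beta)$ need not imply $\alpha\preceq\beta$. One must therefore refuse to compare the full images and instead localize to the first edge at which $f(e)$ and $f(e')$ differ, where honest edges $a_i,b_i$ of $F$ are being compared and the edge-level condition \eqref{mcondition} for $g$ genuinely applies; the monotonicity of $f$ is invoked only at the end, to upgrade comparability to equality. I would take some care with the degenerate subcases in which $g(a_i)$, $g(b_i)$, or one of $f(e),f(e')$ is a vertex, but in each of these one of the two paths is trivially a prefix of the other, so the argument proceeds verbatim.
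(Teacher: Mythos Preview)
Your proof is correct and follows essentially the same route as the paper's. Both arguments reduce to the key observation that if $g(f(e))\preceq g(f(e'))$ and one writes $f(e)=a_1\cdots a_m$, $f(e')=b_1\cdots b_n$, then at each position the images $g(a_i)$ and $g(b_i)$ are prefixes of a common path and hence $\preceq$-comparable, so the monotonicity of $g$ forces $a_i=b_i$; the only cosmetic difference is that the paper iterates forward through the indices while you argue by contradiction at the first point of divergence, and you make the shared-source observation explicit up front rather than folding it into the vertex case analysis.
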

\begin{proof}
For starters, for each graph $E$, \eqref{mcondition} is satified by $f=\id_E$. Assume now that both
morphisms $f\colon E\to F$ and $g\colon F\to G$ satisfy~\eqref{mcondition} and $g(f(e))\preceq g(f(e'))$. 
%If both $f(e)$ and $f(e')$ are vertices, then so are $(g\circ f)(e)$ and
%$(g\circ f)(e')$, and $e=e'$ by the vertex injectivity of $f$ and~$g$. 
If $f(e)$ is a vertex,
then $g(f(e))$ is also a~vertex, so
\begin{equation}
g(f(e))=s_{PG}\big(g(f(e'))\big)=g\big(s_{PF}(f(e'))\big).
\end{equation}
Hence,  $f(e)=s_{PF}(f(e'))$ by the vertex injectivity of~$g$. Consequently,
 $f(e)\preceq f(e')$, which implies~\mbox{$e=e'$}.
Much in the same way, if $f(e')$ is a vertex, then also $g(f(e'))$ is a vertex, so 
\begin{equation}
g(f(e'))=g(f(e))=s_{PG}\big(g(f(e))\big)=g\big(s_{PF}(f(e))\big).
\end{equation}
Hence, $f(e')=s_{PF}(f(e))$ by the vertex injectivity of~$g$. Consequently, 
$f(e')\preceq f(e)$, 
which implies~\mbox{$e=e'$}.

Assume now that 
$f(e)=e_1\ldots e_m$ and $f(e')=e'_1\ldots e'_n\,$,
where $e_1,\ldots, e_m,e'_1,\ldots, e'_n$ are edges.
Then
$g(e_1)\ldots g(e_m)\preceq g(e'_1)\ldots g(e'_n)$ implies that $g(e_1)\preceq g(e'_1)$
or $g(e'_1)\preceq g(e_1)$, whence $e_1=e'_1$. It follows that 
$g(e_2)\ldots g(e_m)\preceq g(e'_2)\ldots g(e'_n)$. We can iterate this argument to conclude that
$e_i=e'_i$ for all $i\in\{1,\ldots,\mathrm{min}\{m,n\}\}$. Therefore, $f(e)\preceq f(e')$ or\linebreak
$f(e')\preceq f(e)$,
so~\mbox{$e=e'$}. 
Summarizing, we have shown that
\begin{equation}
(f\circ g)(e)\preceq (f\circ g)(e')\quad\Rightarrow\quad e=e',
\end{equation}
so \eqref{mcondition} determines a subcategory.
\end{proof}

\begin{definition}
The subcategory of the category $IPG$ given by Lemma~\ref{mlemma} is denoted by $MIPG$ and its subcategory given by restricting 
its objects and morphisms to $BPG$ is denoted by~$MBPG$.
\end{definition}

%\subsection{The subcategory $RMIPG$.}
Finally, to deal with the condition Definition~\ref{relcohndef}(2), we further restrict our morphisms.
\begin{definition}\label{regularity}
Let $f\colon FP(E)\to FP(F)$ be a morphism in the category $PG$ and let 
\begin{equation*}
{\rm reg}_0(E):=\{v\in{\rm reg}(E)~|~s_E^{-1}(v)=\{e\}\text{ and } t_E(e)=v\}
\end{equation*} 
denote the set of {\em 0-regular vertices}. We call $f$ \emph{regular} whenever the following conditions hold:
\vspace*{-2mm}\begin{enumerate}
\item
For any $v\in\mathrm{reg}(E)\setminus {\rm reg}_0(E)$, we require that:
\begin{enumerate}
\item
$f$ restricted to $s_E^{-1}(v)$ be injective;
\item
$p\in f(s_E^{-1}(v))$ if and only if
\begin{enumerate}
\item
$\exists\;n\in\mathbb{N}\setminus\{0\}\colon p=e_1\ldots e_n,\, e_i\in F^1 \text{ for all } i$,
\item
$pq\in f(s_E^{-1}(v))\,\Rightarrow\, q=t_{PE}(p)$,
\item
$\forall i\in\{1,\ldots,n\}, e\in s_F^{-1}(s_F(e_i))\,\exists\,r\in FP(F):e_1\ldots e_{i-1}er\in f(s_E^{-1}(v))$.
\end{enumerate}
\end{enumerate}
\item 
For any $v\in {\rm reg}_0(E)$, either the above condition holds or $f(s^{-1}_E(v))=f(v)$.
\end{enumerate}
\end{definition}
\noindent
Note that all vertices in the paths belonging to $f(s^{-1}_E(v))$, possibly except for their endpoints, are regular. Also, observe that
the set
$f(s^{-1}_E(v))$ can be viewed as constructed in the following way: we take  $x\in s_F^{-1}(f(v))$ 
and either
set it aside as a length-one element of $f(s^{-1}_E(v))$, or extend it by all edges emitted from $t_F(x)$. Any thus 
obtained path of length two, we either set aside as an element of 
$f(s^{-1}_E(v))$, or extend it by all edges emitted from its end. Then we iterate this procedure until we obtain the
set~$f(s^{-1}_E(v))$. 

\begin{example}
{\rm Let $f\colon E\to F$ be a path homomorphism of graphs such that the restriction
\begin{equation*}
s^{-1}_E(v)\longrightarrow s^{-1}_F(f(v))
\end{equation*}
is bijective for all $v\in\mathrm{reg}(E)$. Then $f$ is regular. Observe that the above condition was considered in~\cite[Section~2.3]{kr-g09}.}
\end{example}
\begin{example}
{\rm Let 
$$
E:=\quad
\begin{tikzpicture}[auto,swap]
\tikzstyle{vertex}=[circle,fill=black,minimum size=3pt,inner sep=0pt]
\tikzstyle{edge}=[draw,->]
\tikzstyle{cycle1}=[draw,->,out=130, in=50, loop, distance=40pt]
\tikzstyle{cycle2}=[draw,->,out=130, in=50, loop, distance=70pt]
   
\node[vertex,label=below:$v$] (0) at (0,0) {};
%\node (1) at (0,1.25) {\vdots};
%\node[vertex,label=below:] (1) at (1,-1) {};
%\node[vertex,label=below:$v$] (2) at (-1,-1) {};

\path (0) edge[cycle1] node[above] {$e$} (0);
%\path (0) edge[edge] node {} (1);
%\path (0) edge[edge,bend right] node {} (1);
%\path (0) edge[edge] node {} (2);
%\path (0) edge[cycle2] node[above] {$e_n$} (0);
%\path (2) edge[edge] node {} (1);

\end{tikzpicture}\text{ and }\quad
F:=\quad
\begin{tikzpicture}[auto,swap]
\tikzstyle{vertex}=[circle,fill=black,minimum size=3pt,inner sep=0pt]
\tikzstyle{edge}=[draw,->]
\tikzstyle{cycle1}=[draw,->,out=130, in=50, loop, distance=40pt]
\tikzstyle{cycle2}=[draw,->,out=130, in=50, loop, distance=70pt]
   
\node[vertex,label=below:$v$] (0) at (0,0) {};
%\node (1) at (0,1.25) {\vdots};
%\node[vertex,label=below:] (1) at (1,-1) {};
%\node[vertex,label=below:$v$] (2) at (-1,-1) {};

%\path (0) edge[cycle1] node[above] {$e$} (0);
%\path (0) edge[edge] node {} (1);
%\path (0) edge[edge,bend right] node {} (1);
%\path (0) edge[edge] node {} (2);
%\path (0) edge[cycle2] node[above] {$e_n$} (0);
%\path (2) edge[edge] node {} (1);

\end{tikzpicture}.
$$
Then $f\colon FP(E)\to FP(F)$ must be the constant map, and it is regular by the condition (2) of Definition~\ref{regularity}.}
%Despite this, $f$ induces a homomorphism of Leavitt path algebras $L_k(E)\to L_k(F)$, which is an evaluation
%map. Therefore, we might relax the regularity condition. For instance, given a regular path homomorphism
%of graphs $g\colon FP(E_1)\to FP(E_2)$, we might attach single loops of length one at sinks of $E_1$ and
%extend $g$ to the thus enlarged graph by collapsing the loops to their base points. Then, if $g$ induces
%a homomorphism of Leavitt path algebras, so should its extension.\\
\end{example}
\begin{example}
{\rm Let 
$$
E:=\begin{tikzpicture}[auto,swap]
\tikzstyle{vertex}=[circle,fill=black,minimum size=3pt,inner sep=0pt]
\tikzstyle{edge}=[draw,->]
   
\node[vertex,label=below:$v$] (0) at (0,0) {};
\node[vertex,label=below:$w$] (3) at (1,0) {}; 

\path (0) edge[edge] node[above] {$e$} (3);

\end{tikzpicture}\quad \text{and} \quad
F:=\begin{tikzpicture}[auto,swap]
\tikzstyle{vertex}=[circle,fill=black,minimum size=3pt,inner sep=0pt]
\tikzstyle{edge}=[draw,->]
   
\node[vertex,label=below:$a$] (0) at (0,0) {};
\node[vertex,label=below:$b$] (1) at (1,0) {}; 
\node[vertex,label=below:$c$] (2) at (2,0) {}; 

\path (0) edge[edge] node[above] {$x$} (1);
\path (1) edge[edge] node[above] {$y$} (2);

\end{tikzpicture}.
$$
Then mapping $v$ to $a$, $w$ to $c$, and $e$ to $xy$ defines a regular path homomorphism of graphs.}
\end{example}
\begin{example}
{\rm Let \begin{equation*}E:=\begin{tikzpicture}[auto,swap]
\tikzstyle{vertex}=[circle,fill=black,minimum size=3pt,inner sep=0pt]
\tikzstyle{edge}=[draw,->]
\tikzstyle{cycle1}=[draw,->,out=130, in=50, loop, distance=40pt]
\tikzstyle{cycle2}=[draw,->,out=100, in=30, loop, distance=40pt]
   
\node[vertex,label=below:$v$] (0) at (0,0) {};
\node[vertex,label=below:$w_1$] (1) at (-1,-1) {};
\node[vertex,label=below:$w_2$] (2) at (1,-1) {};

%\path (0) edge[cycle1] node[above] {$u$} (0);
\path (0) edge[edge] node[above left] {$f_1$} (1);
\path (0) edge[edge] node[above right] {$f_2$} (2);
%\path (2) edge[edge] node[above] {$e_{i+1}$} (1);

\end{tikzpicture}\quad \text{and} \quad F:=
\begin{tikzpicture}[auto,swap]
\tikzstyle{vertex}=[circle,fill=black,minimum size=3pt,inner sep=0pt]
\tikzstyle{edge}=[draw,->]
\tikzstyle{cycle1}=[draw,->,out=130, in=50, loop, distance=40pt]
\tikzstyle{cycle2}=[draw,->,out=100, in=30, loop, distance=40pt]
   
\node[vertex,label=below:$v$] (0) at (0,0) {};
\node[vertex,label=below:$w_1$] (1) at (-1,-1) {};
\node[vertex,label=below:$w_2$] (2) at (1,-1) {};

\path (0) edge[cycle1] node[above] {$u$} (0);
\path (0) edge[edge] node[above left] {$f_1$} (1);
\path (0) edge[edge] node[above right] {$f_2$} (2);
%\path (2) edge[edge] node[above] {$e_{i+1}$} (1);

\end{tikzpicture}.
\end{equation*}
Then there is no regular path homomorphism from $E$ to~$F$.}
\end{example}

\begin{example}
{\rm Let \begin{equation*}
E:=\quad\begin{tikzpicture}[auto,swap]
\tikzstyle{vertex}=[circle,fill=black,minimum size=3pt,inner sep=0pt]
\tikzstyle{edge}=[draw,->]
\tikzstyle{cycle1}=[draw,->,out=130, in=50, loop, distance=40pt]
\tikzstyle{cycle2}=[draw,->,out=100, in=30, loop, distance=40pt]
   
\node[vertex,label=below:$u$] (-1) at (-1,0) {};
\node[vertex,label=below:$v$] (0) at (0,0) {};
\node[vertex,label=below:$w$] (1) at (1,0) {};
%\node[vertex] (2) at (2,0) {};

\path (0) edge[edge] node[above] {$e_0$} (-1);
\path (0) edge[edge, bend right] node[below] {$e_2$} (1);
\path (0) edge[edge,bend left] node[above] {$e_1$} (1);
%\path (2) edge[edge] node[above] {$e_{i+1}$} (1);

\end{tikzpicture}\quad\text{and}\quad
F:=\quad\begin{tikzpicture}[auto,swap]
\tikzstyle{vertex}=[circle,fill=black,minimum size=3pt,inner sep=0pt]
\tikzstyle{edge}=[draw,->]
\tikzstyle{cycle1}=[draw,->,out=130, in=50, loop, distance=40pt]
\tikzstyle{cycle2}=[draw,->,out=100, in=30, loop, distance=40pt]

\node[vertex,label=below:$v$] (0) at (0,0) {};
\node[vertex,label=below:] (1) at (1,0) {};
\node[vertex,label=below:$u$] (2) at (2,1) {};
\node[vertex,label=below:$w$] (3) at (2,-1) {};

\path (0) edge[edge, bend right] node[below] {$x_2$} (1);
\path (0) edge[edge,bend left] node[above] {$x_1$} (1);
\path (1) edge[edge] node[above] {$y_1$} (2);
\path (1) edge[edge] node[above] {$y_2$} (3);

\end{tikzpicture}.
\end{equation*}
Then mapping $e_0$ to $x_1y_1$, $e_1$ to $x_1y_2$, and $e_2$ to $x_2y_2$ defines a  path homomorphism
of graphs that is not regular because we are missing~$x_2y_1$.}
\end{example}

\begin{lemma}\label{rlemma}
Restricting morphisms in the category $MIPG$ to the morphisms 
 satisfying the regularity condition of Definition~\ref{regularity}
 yields a subcategory of~$MIPG$. 
\end{lemma}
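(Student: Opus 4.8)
The plan is to verify the two axioms making the regular morphisms a subcategory: that each $\id_E$ is regular, and that regularity is stable under composition. It is convenient to first reformulate Definition~\ref{regularity}(1): for $v\in\mathrm{reg}(E)\setminus\mathrm{reg}_0(E)$ the set $f(s_E^{-1}(v))$ is a \emph{complete prefix code based at $f(v)$}, by which I mean a finite set of positive-length paths starting at $f(v)$ that is an antichain for the order $\preceq$ of \eqref{popaths} (this is (b)(ii)) and is \emph{saturated}: at every vertex occurring as the source of an edge of one of its paths, \emph{all} outgoing edges of $F$ prolong a path of the code (this is (b)(iii)). A short induction on the maximal length of its elements shows that a finite complete prefix code $S$ based at $z$ is recovered as exactly the set of paths satisfying (b)(i)--(iii) relative to $S$: no element has a proper prefix in $S$ by the antichain property, while conversely, following a candidate $p$ edge by edge through the finite prefix tree of $S$, saturation forces each prefix of $p$ into that tree and (b)(ii) together with the antichain property force $p$ to be a maximal node, i.e.\ $p\in S$. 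This is the device that tames the self-referential ``if and only if'' of Definition~\ref{regularity}(1)(b), and I use it in both parts.

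For the identity, $\id_E(s_E^{-1}(v))=s_E^{-1}(v)$ is the antichain of positive-length edges emitted by $v$, trivially saturated at $v$; the tree argument supplies the converse half of (b). For $v\in\mathrm{reg}_0(E)$ the set $s_E^{-1}(v)$ is the single loop at $v$, again such a code, so the first alternative of Definition~\ref{regularity}(2) holds, and $\id_E$ is regular. Now let $f\colon E\to F$ and $g\colon F\to G$ be regular morphisms of $MIPG$, fix $v\in\mathrm{reg}(E)\setminus\mathrm{reg}_0(E)$, and set $P:=f(s_E^{-1}(v))$, a complete prefix code based at $w:=f(v)$. The base $w$ is automatically regular, since all its outgoing edges occur as first edges of the finite non-empty $P$; moreover every source of an edge of a path of $P$ is regular and, away from degenerate loop tails, not $0$-regular, so $g$ sends such edges to positive-length paths by Definition~\ref{regularity}(1). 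Injectivity of $g\circ f$ on $s_E^{-1}(v)$ (condition (a)) follows from injectivity of $f$ and the monotonicity \eqref{mcondition} of $g$: comparing two distinct, hence $\preceq$-incomparable, elements of $P$ at their first point of divergence and applying \eqref{mcondition} edge by edge, exactly as in Lemma~\ref{mlemma}, shows $g$ is injective on $P$; the same comparison shows $g(p)\preceq g(p')$ forces $p,p'$ to be $\preceq$-comparable, so the antichain property of $P$ passes to $g(P)$, giving (b)(ii).

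The heart of the matter is saturation (b)(iii) for $g(P)$, which I would prove by locating a source vertex $s_G(d_j)$ of a path $g(c_1)\cdots g(c_a)=d_1\cdots d_m$ (with $c_1\cdots c_a\in P$) inside its block $g(c_i)$. If $d_j$ is not the first edge of its block, then $s_G(d_j)$ is interior to $g(c_i)$ and saturation is furnished by that of the inner code $g(s_F^{-1}(s_F(c_i)))$ at this interior vertex; the witnessing prolongation is transported back into $g(P)$ by saturation of $P$ at $c_i$ together with the multiplicativity $g(\alpha\beta)=g(\alpha)g(\beta)$. If $d_j$ is the first edge of its block, then $s_G(d_j)=g(s_F(c_i))$, and saturation of the inner code at its base makes every outgoing edge there start some $g(c')$ with $c'\in s_F^{-1}(s_F(c_i))$; saturation of $P$ at $c_i$ prolongs $c_1\cdots c_{i-1}c'$ inside $P$, and applying $g$ exhibits the required element of $g(P)$. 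Thus $g(P)$ is a finite saturated antichain of positive-length paths based at $g(w)=(g\circ f)(v)$, and the tree argument delivers the converse half of (b); this settles the case $v\notin\mathrm{reg}_0(E)$.

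The main obstacle, beyond the bookkeeping of this two-layer grafting, is the treatment of $0$-regular vertices and, more generally, of edges that $g$ may collapse to vertices. At the base, if $v\in\mathrm{reg}_0(E)$ and $f$ collapses its loop ($f(e)=f(v)$), then so does $g\circ f$, landing in the second alternative of Definition~\ref{regularity}(2); otherwise $P$ is a genuine code and the argument above applies, save that $g$ may still collapse a loop of $F$ seated at a $0$-regular vertex, again sending $g\circ f$ into the collapse alternative. Such collapses also occur at interior loop tails of paths of $P$, where a block $g(c_i)$ degenerates to a vertex and drops out of $g(p)$; I would handle them by checking that a degenerate tail contributes no new source vertex to $g(p)$ and that saturation at the last genuine junction is already covered by the preceding block, so that $g(P)$ remains a complete prefix code (or collapses wholesale to $\{g(w)\}$ in the $0$-regular base case). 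Combining the grafting argument with this case analysis shows $g\circ f$ is regular, which completes the proof that the regular morphisms form a subcategory of $MIPG$.
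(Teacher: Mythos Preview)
Your proof is correct and follows essentially the same strategy as the paper's: injectivity and the antichain property (b)(ii) come from monotonicity, saturation (b)(iii) is established by locating the vertex inside a block $g(c_i)$ and combining the inner regularity of $g$ at $s_F(c_i)$ with the outer saturation of $P$, and the $0$-regular case is handled by a separate collapse-versus-loop analysis. Your ``complete prefix code'' reformulation and the explicit tree argument for the converse half of Definition~\ref{regularity}(1)(b) are helpful organizational devices that the paper leaves implicit, though your final paragraph's conditional phrasing (``I would handle them by checking\ldots'') leaves the loop-tail collapse case about as sketchy as the paper's own inductive remark on $s_F(x_m)\in\mathrm{reg}_0(F)$.
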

\begin{proof}
To begin with, it is clear that, for each graph $E$, $f=\id_E$ satisfies the regularity condition. Assume now that 
$f\colon E\to F$ and $g\colon F\to G$ are morphisms in~$MIPG$. Then, by Lemma~\ref{mlemma}, 
$g\circ f$
is injective on edges. 
Next, let us observe that regular vertices that are not 0-regular must be mapped to regular vertices that are not 0-regular by
any morphism in $MIPG$ that satisfies the regularity condition.
Indeed, if $v\in \mathrm{reg}(E)\setminus\mathrm{reg}_0(E)$, 
then $f(v)\in \mathrm{reg}(E)$. Suppose now that $f(v)\in \mathrm{reg}_0(E)$.
Then, since $v$ emits more than one edge, or emits an edge that is not a~loop, and $f(v)$ 
emits only one edge that is a loop, we immediately obtain a 
contradiction with the regularity condition or the injectivity-on-vertices condition.

Now, let us consider the composition of morphisms restricted to regular vertices that are not 0-regular.
From the regularity of $f$ and the monotonicity of $g$, we infer that 
\begin{equation}
g\circ f\colon s_E^{-1}(v)\to g(f(s^{-1}_E(v)))
\end{equation}
 is bijective for any such a vertex~$v$. 
Indeed, arguing as in the proof of Lemma~\ref{mlemma}, 
we see that the only different
paths which $g$ can glue are paths such that one extends the other, and there are no such pairs of different paths in $f(s^{-1}_E(v))$.
Furthermore, since $f(v)$ is regular but not $0$-regular, and $g$ is regular, all elements of $g(f(s_E^{-1}(v)))$ are positive-length paths. Better still, the implication $p$, $pq\in g(f(s^{-1}_E(v)))$ $\Rightarrow$ $q=t_{PE}(p)$ follows immediately from the monotonicity of $g\circ f$ guaranteed by Lemma~\ref{mlemma}. Next, assume that $e_1\ldots e_n\in g(f(s^{-1}_E(v)))$ and $e\in s^{-1}_G(s_G(e_i))$ for some $i\in\{1,\ldots, n\}$. Then $e_1\ldots e_i=g(x_1\ldots x_m)$, where $x_j\in F^1$, $1\leq j\leq m$. As $x_1\ldots x_m\in f(s_E^{-1}(v))$, the vertex $s_F(x_m)$ is regular. If it is not $0$-regular, then $g(x_m)=e_k\ldots e_i$ for some $k$. Now, there exists $r\in FP(G)$ such that $e_k\ldots e_{i-1}er=g(x)$ for $x\in s_F^{-1}(s_F(x_m))$. Also, there exists $r'\in FP(F)$ such that $x_1\ldots x_{m-1}xr'\in f(s^{-1}_E(v))$. Hence,
\begin{equation}
g(x_1\ldots x_{m-1}xr')=g(x_1\ldots x_{m-1})g(x)g(r')=e_1\ldots e_{k-1}e_k\ldots e_{i-1}erg(r'),
\end{equation}
so $e_1\ldots e_ierg(r')\in g(f(s^{-1}_E(v)))$, as needed. On the other hand, if $s_F(x_m)\in {\rm reg}_0(F)$, then either we argue as above, or $e_1\ldots e_i=g(x_1\ldots x_{m-1})$. Reasoning inductively, we arrive at $e_1\ldots e_i=g(x_1)$, which ends the argument because $s_F(x_1)=f(v)\notin {\rm reg}_0(F)$.

Finally, if $v\in\mathrm{reg}_0(E)$, then $f(s_E^{-1}(v))$ is either $f(v)$ or a loop at $f(v)$ such that every vertex of this loop emits only one edge.
In the former case we are done as $g\circ f$ satisfies the regularity condition at~$v$. The latter case splits into two subcases: either the loop at 
$f(v)$ has only one vertex or more than one vertex. In the former subcase, $f(v)\in\mathrm{reg}_0(F)$, so $g$ either maps $s_F^{-1}(f(v))$
to  $g(f(v))$ or to a loop at $g(f(v))$ such that every vertex of this loop emits only one edge. In the former instance, we are done because
\begin{equation}
g(f(s_E^{-1}(v)))=g((s_F^{-1}(f(v)))^n)=g(f(v)),
\end{equation}
 where $n\in\mathbb{N}\setminus\{0\}$. In the latter instance, we are also done as then
$g\circ f$ maps the loop  $s_E^{-1}(v)$ to a positive power of a loop at $g(f(v))$ such that every vertex of this loop emits only one edge,
which again is a loop whose all vertices emit only one edge, rendering the regularity condition satisfied. In the latter subcase, every vertex
in the loop at $f(v)$ is a regular vertex that is not 0-regular. Therefore, $g$ maps each edge of the loop at $f(v)$ to a positive-length path whose each vertex emits only one edge, so $g(f(s^{-1}_E(v)))$ is again a loop whose each vertex emits exactly one edge.
\end{proof}
\begin{definition}
The subcategory of the category $MIPG$ and $MBPG$ given by Lemma~\ref{rlemma} is denoted by $RMIPG$ and $RMBPG$.
\end{definition}

\section{Covariant functors into categories of algebras}

\subsection{Path algebras}

Let $SR$ denote the category of semirings with semiring homomorphisms and let $IPG$ stand for the category of graphs with path 
homomorphisms of graphs injective on vertices
 as morphisms. One can easily check that the
assignment
\begin{gather}
\mathrm{Obj}(IPG)\ni E\stackrel{R}{\longmapsto}R_E\in\mathrm{Obj}(SR),\nonumber\\
\mathrm{Mor}(IPG)\ni (f\colon E\to F)
\stackrel{R}{\longmapsto}(f_*\colon R_E\to R_F)\in\mathrm{Mor}(SR),\nonumber\\
R_E\ni A\stackrel{f_*}{\longmapsto}f(A)\in R_F\,,
\end{gather}
defines a covariant functor. As the image of a finite subset is a finite subset, the same assignment
but with $R$ replaced by $R^f$ also defines a covariant functor to the category $SR$. In what follows, we explore such covariant functoriality for different types of algebras given by graphs.

Let $KA$ be the category
of algebras over a field~$k$ with algebra homomorphisms as morphisms, and let $UKA$ be the category
of unital algebras over a field~$k$ with unital algebra homomorphisms as morphisms. 

\begin{proposition}\label{prop226}
{\rm (cf.} \cite[Section~2.2]{kr-g09}{\rm )} The assignment
\begin{align*}
\mathrm{Obj}(IPG)\ni E&\stackrel{}{\longmapsto}kE\in\mathrm{Obj}(KA),\\
\mathrm{Mor}(IPG)\ni (f\colon E\to F)
&\stackrel{}{\longmapsto}(f_*\colon kE\to kF)\in\mathrm{Mor}(KA),\\
\forall\; p\in FP(E)\colon kE\ni \chi_p&\stackrel{f_*}{\longmapsto}\chi_{f(p)}\in kF\,,
\end{align*}
defines a covariant functor. Furthermore, the same assignment restricted to the subcategory $BPG$
yields a covariant functor to the category~$UKA$.
\end{proposition}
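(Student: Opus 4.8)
The plan is to verify the three functor axioms directly: $f_*$ is a well-defined algebra homomorphism, identities go to identities, and composition is respected. Since $kE$ has the basis $\{\chi_p\}_{p\in FP(E)}$, the map $f_*$ is determined on this basis by $\chi_p\mapsto\chi_{f(p)}$ and extended $k$-linearly, so it is automatically a well-defined linear map. The substantive point is multiplicativity, so first I would check that $f_*$ respects the multiplication $m$. Recall $m(\chi_p,\chi_q)=\chi_{pq}$ when $t_{PE}(p)=s_{PE}(q)$ and $0$ otherwise. I would split into the two cases of the source-target matching condition for $p,q\in FP(E)$.

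\textbf{Multiplicativity.} Suppose $t_{PE}(p)=s_{PE}(q)$. Then $pq$ is defined and, by axiom (3) of a path homomorphism, $f(pq)=f(p)f(q)$; moreover axiom (2) gives $t_{PF}(f(p))=f(t_{PE}(p))=f(s_{PE}(q))=s_{PF}(f(q))$, so the product $f(p)f(q)$ is a legitimate concatenation in $FP(F)$. Hence
\begin{equation*}
f_*(m(\chi_p,\chi_q))=f_*(\chi_{pq})=\chi_{f(pq)}=\chi_{f(p)f(q)}=m(\chi_{f(p)},\chi_{f(q)})=m(f_*(\chi_p),f_*(\chi_q)).
\end{equation*}
The opposite case, when $t_{PE}(p)\neq s_{PE}(q)$, is where the \emph{vertex-injectivity} hypothesis enters and is the main obstacle. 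Here $m(\chi_p,\chi_q)=0$, so I must show $m(\chi_{f(p)},\chi_{f(q)})=0$ as well, i.e.\ that $t_{PF}(f(p))\neq s_{PF}(f(q))$. By axiom (2) again, $t_{PF}(f(p))=f(t_{PE}(p))$ and $s_{PF}(f(q))=f(s_{PE}(q))$; both $t_{PE}(p)$ and $s_{PE}(q)$ are vertices, hence lie in $E^0$, and they are distinct by assumption. Since $f$ is injective on $E^0$ (this is precisely the defining restriction of the subcategory $IPG$), their images $f(t_{PE}(p))$ and $f(s_{PE}(q))$ are distinct vertices of $F$, so the concatenation is not composable and $m(f_*(\chi_p),f_*(\chi_q))=0$. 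Without vertex-injectivity this step would fail, which explains why the functor is defined on $IPG$ rather than on all of $PG$.

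\textbf{Identities and composition.} For the identity path homomorphism $\id_E$ one has $f(p)=p$ for all $p$, so $(\id_E)_*(\chi_p)=\chi_p$, giving $(\id_E)_*=\id_{kE}$. For composition, if $f\colon E\to F$ and $g\colon F\to G$ are morphisms in $IPG$, then on basis elements $(g\circ f)_*(\chi_p)=\chi_{(g\circ f)(p)}=\chi_{g(f(p))}=g_*(\chi_{f(p)})=g_*(f_*(\chi_p))$, and by linearity $(g\circ f)_*=g_*\circ f_*$; this uses only that the underlying maps on finite paths compose and that $g\circ f$ is again vertex-injective (a composite of injections), which keeps us inside $IPG$. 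This establishes the covariant functor $IPG\to KA$.

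\textbf{The unital refinement.} Finally I would treat the restriction to $BPG$. When $|E^0|<\infty$, the element $1_{kE}=\sum_{v\in E^0}\chi_v$ is the multiplicative unit of $kE$. For a morphism $f$ in $BPG$, the map $f^0\colon E^0\to F^0$ is a \emph{bijection}, so $f$ permutes the vertex basis and
\begin{equation*}
f_*(1_{kE})=f_*\Big(\sum_{v\in E^0}\chi_v\Big)=\sum_{v\in E^0}\chi_{f(v)}=\sum_{w\in F^0}\chi_w=1_{kF}.
\end{equation*}
Thus $f_*$ is unital, and together with the algebra-homomorphism property already proved, the assignment restricts to a covariant functor $BPG\to UKA$. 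The only genuinely delicate point in the whole argument is the non-composable case of multiplicativity handled above; everything else is a routine check on basis elements extended by linearity.
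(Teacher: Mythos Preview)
Your proof is correct and follows essentially the same approach as the paper's: verify linearity on the basis, check multiplicativity using vertex-injectivity for the non-composable case, note functoriality, and use the bijection on vertices for unitality in $BPG$. The only cosmetic difference is that the paper packages your two-case multiplicativity argument into a single chain of equalities via the Kronecker delta $\delta_{t_{PE}(p),s_{PE}(q)}=\delta_{t_{PF}(f(p)),s_{PF}(f(q))}$, which is exactly your observation that vertex-injectivity preserves (non-)composability.
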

\begin{proof}
For starters, $f_*\colon kE\to kF$ is, clearly, a linear map. To check that it is a homomorphism of algebras,
using the injectivity of $f\colon FP(E)\to FP(F)$ restricted to $E^0$, we compute:
\begin{align}
f_*(\chi_p\chi_q)&=\delta_{t_{PE}(p),s_{PE}(q)}f_*(\chi_{pq})=\delta_{t_{PE}(p),s_{PE}(q)}\chi_{f(pq)}=\delta_{t_{PE}(p),s_{PE}(q)}\chi_{f(p)f(q)}
\nonumber\\&=\delta_{t_{PF}(f(p)),s_{PF}(f(q))}\chi_{f(p)f(q)}=\chi_{f(p)}\chi_{f(q)}=f_*(\chi_p)f_*(\chi_q).
\end{align}
Hence, $f_*$ is an algebra homomorphism, as claimed. 
The covariance  is obvious because
\begin{equation}
(f\circ g)_*=f_*\circ g_*.
\end{equation}

 Finally, if $E$ and $F$ are graphs with finitely many vertices, then $kE$ and $kF$ are unital algebras
with $\sum_{v\in E^0}\chi_v$ and $\sum_{w\in F^0}\chi_{w}$ as respective units, and the unitality of~$f_*$
follows from the bijectivity of $f\colon FP(E)\to FP(F)$
 restricted to vertices:
\begin{equation}\label{sumvex}
f_*\left(\sum_{v\in E^0}\chi_v\right)=\sum_{v\in E^0}\chi_{f(v)}=\sum_{w\in F^0}\chi_{w}\,.
\vspace*{-6mm}\end{equation}
\end{proof}

\subsection{Cohn path algebras}

To pass to the functoriality of the construction of Cohn and Leavitt path algebras, first we need to establish the 
functoriality of the extended graph construction. We leave the routine proof of the following lemma to the reader.
\begin{lemma}\label{extlem}
Let $f\colon E\to F$ be any path homomorphism of graphs. Then the formulas
\begin{gather*}
\forall\;v\in E^0\colon \bar{f}(v):=f(v),\quad
\forall\;e\in E^1\colon \bar{f}(e):=f(e),\,\bar{f}(e^*):=f(e)^*,\\
\forall\;p:=x_1\ldots x_n\in FP(\bar{E}),\,x_1,\ldots,x_n\in \bar{E}^1\colon 
\bar{f}(p):=\bar{f}(x_1)\ldots\bar{f}(x_n),
\end{gather*}
define a path homomorphism $\bar{f}\colon\bar{E}\to \bar{F}$ of extended graphs such that the assignment 
$E\longmapsto \bar{E}$, $f\longmapsto \bar{f}$,
yields a 
covariat endofunctor of the category of graphs and path homomorphisms of graphs.
Furthermore, this endofunctor restricts to an endofunctor of the subcategory~$BPG$.
\end{lemma}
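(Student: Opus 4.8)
The plan is to verify in turn that $\bar f$ is a well-defined path homomorphism, that the assignment respects identities and composition, and finally that it restricts to $BPG$. Throughout, the only genuinely new ingredient beyond the corresponding facts about $f$ is the bookkeeping of the order-reversing star operation, so I expect that to be where the (modest) care is needed; everything else is formal.

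First I would check well-definedness together with the path-homomorphism axioms. Since every path in $\bar E$ factors uniquely into generating edges $x_1\ldots x_n$ with $x_i\in\bar E^1$, the formula for $\bar f(p)$ is unambiguous; what must be checked is that the right-hand side is an honest path of $\bar F$, i.e. that consecutive factors concatenate. This reduces to verifying that $\bar f$ respects sources and targets on the generators $e$ and $e^*$. For $e\in E^1$ this is inherited verbatim from condition~(2) for $f$. For $e^*\in(E^1)^*$, recall that $s_{\bar E}(e^*)=t_E(e)$ and $t_{\bar E}(e^*)=s_E(e)$, while $\bar f(e^*)=f(e)^*$; writing $f(e)=g_1\ldots g_m$ with $g_j\in F^1$, the reversed path $f(e)^*=g_m^*\ldots g_1^*$ has source $t_F(g_m)=t_{PF}(f(e))=f(t_{PE}(e))=f(t_E(e))=\bar f(s_{\bar E}(e^*))$ and, symmetrically, target $f(s_E(e))=\bar f(t_{\bar E}(e^*))$, the degenerate case $f(e)\in F^0$ being covered by the convention $w^*=w$ on vertices. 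Once sources and targets match on generators, the endpoints of successive factors agree whenever $t_{\bar E}(x_i)=s_{\bar E}(x_{i+1})$, so the concatenation is defined; axioms~(1) and~(3) are then immediate, while~(2), already verified on generators, propagates to all paths because the source (resp. target) of a concatenation is that of its first (resp. last) factor.

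Next I would establish functoriality. That $\overline{\id_E}=\id_{\bar E}$ is read off the defining formulas. For composition $\overline{g\circ f}=\bar g\circ\bar f$ it suffices, by multiplicativity, to compare the two sides on vertices and on the generators $e,e^*$. On vertices and on unstarred edges the identity holds because $\bar g$ agrees with $g$ on $F^0$ and on $F^1$ and is multiplicative, so $\bar g(f(e))=g(f(e))$. The one computation carrying real content is on $e^*$: here $(\bar g\circ\bar f)(e^*)=\bar g(f(e)^*)$, and writing $f(e)=g_1\ldots g_m$ one gets $\bar g(g_m^*\ldots g_1^*)=g(g_m)^*\ldots g(g_1)^*=(g(g_1)\ldots g(g_m))^*=(g(f(e)))^*=\overline{g\circ f}(e^*)$, where the middle step is exactly the anti-multiplicativity $(pq)^*=q^*p^*$ of the star on $FP(G)$ combined with the defining relation $\bar g(h^*)=g(h)^*$. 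I expect this interplay between order reversal and the reversed endpoints of starred edges to be the main obstacle in the whole lemma.

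Finally, for the restriction to $BPG$, note that $\bar E^0=E^0$ remains finite and that $\bar f$ coincides with $f$ on vertices; hence if $f$ is bijective on vertices then so is $\bar f$, and the covariant endofunctor $E\mapsto\bar E$, $f\mapsto\bar f$ restricts to an endofunctor of $BPG$, as claimed.
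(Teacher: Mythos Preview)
Your proof is correct. The paper itself omits the argument entirely, stating ``We leave the routine proof of the following lemma to the reader,'' so there is no approach to compare against; your careful verification of well-definedness, source/target compatibility on the starred generators, functoriality via the anti-multiplicativity $(pq)^*=q^*p^*$, and the restriction to $BPG$ is precisely the routine check the authors have in mind.
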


We are now ready to determine the covariant functoriality of assigning Cohn path algebras to graphs.
\begin{proposition}\label{cohncov}
The assignment
\begin{align*}
\mathrm{Obj}(MIPG)\ni E&\stackrel{}{\longmapsto}C_k(E)\in\mathrm{Obj}(KA),\nonumber\\
\mathrm{Mor}(MIPG)\ni (f\colon E\to F)
&\stackrel{}{\longmapsto}(f_*^C\colon C_k(E)\to C_k(F))\in\mathrm{Mor}(KA),\nonumber\\
\forall\; p\in FP(\bar{E})\colon C_k(E)\ni [\chi_p]&\stackrel{f_*^C}{\longmapsto}[\chi_{\bar{f}(p)}]\in C_k(F)\,,
\label{iipgfunctor}
\end{align*}
defines a covariant functor. Furthermore, the same assignment restricted to the subcategory  $MBPG$ 
yields a covariant functor to the category~$UKA$.
\end{proposition}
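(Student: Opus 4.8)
The plan is to realize $f_*^C$ as the map induced on quotients by the path-algebra homomorphism $\bar{f}_*\colon k\bar{E}\to k\bar{F}$. First I would note that, since $f$ is injective on vertices and $\bar{E}^0=E^0$, the extended path homomorphism $\bar{f}$ (Lemma~\ref{extlem}) is again injective on vertices, so $\bar{f}\in IPG$ and Proposition~\ref{prop226} applies to produce an algebra homomorphism $\bar{f}_*\colon k\bar{E}\to k\bar{F}$, $\chi_p\mapsto\chi_{\bar{f}(p)}$. Writing $C_k(E)=k\bar{E}/I_E$ and $C_k(F)=k\bar{F}/I_F$, where $I_E$ is the ideal generated by the Cohn relations $\{\chi_{e^*}\chi_{e'}-\delta_{e,e'}\chi_{t_E(e)}\}_{e,e'\in E^1}$, the whole content of well-definedness is the inclusion $\bar{f}_*(I_E)\subseteq I_F$. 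Since $\bar{f}_*$ is an algebra homomorphism and $I_F$ is an ideal, it suffices to check that the image of each generator lies in $I_F$, that is, that $[\chi_{f(e)^*}\chi_{f(e')}]=\delta_{e,e'}[\chi_{f(t_E(e))}]$ holds in $C_k(F)$.

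The key step is this computation, which I would carry out by telescoping. Write $f(e)=x_1\cdots x_m$ and $f(e')=y_1\cdots y_n$ with $x_i,y_j\in F^1$, allowing the degenerate cases $m=0$ or $n=0$ where $f(e)$ or $f(e')$ is a vertex. In $C_k(F)$ one has $[\chi_{x_1^*}][\chi_{y_1}]=\delta_{x_1,y_1}[\chi_{t_F(x_1)}]$, and because consecutive edges of a path share the relevant vertex, the factor $[\chi_{t_F(x_1)}]$ is absorbed into the neighbouring terms; iterating peels off matching pairs $x_i=y_i$ one at a time. If $e=e'$ then $f(e)=f(e')$, all pairs match, and the product collapses to $[\chi_{t_F(x_m)}]=[\chi_{f(t_E(e))}]$, using $f\circ t_{PE}=t_{PF}\circ f$; the vertex case gives $[\chi_{f(e)}]=[\chi_{f(t_E(e))}]$ directly.

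The only way the telescoping fails to vanish when $e\neq e'$ is if the shorter of $f(e),f(e')$ is an initial segment of the longer, that is, $f(e)\preceq f(e')$ or $f(e')\preceq f(e)$; this subsumes the vertex subcases, where being an initial segment reduces to a coincidence of a vertex with an endpoint of the other path. But $f\in MIPG$ satisfies the monotonicity condition~\eqref{mcondition}, which forbids either relation unless $e=e'$. Hence for $e\neq e'$ the telescoping meets a mismatch $x_i\neq y_i$ at some $i\le\min\{m,n\}$ and the product is $0$, as required. This monotonicity-driven vanishing is the crux of the argument, and the step I expect to demand the most care, precisely because the vertex-length cases must be folded into the same dichotomy.

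It then remains to record functoriality and, over $MBPG$, unitality. The covariance $(g\circ f)_*^C=g_*^C\circ f_*^C$ and $(\id_E)_*^C=\id$ follow on the generators $[\chi_p]$ from $\overline{g\circ f}=\bar{g}\circ\bar{f}$ (Lemma~\ref{extlem}) together with the covariance of the path-algebra functor (Proposition~\ref{prop226}). Finally, when $E^0$ and $F^0$ are finite the path algebras $k\bar{E}$, $k\bar{F}$ are unital, so $C_k(E)$, $C_k(F)$ carry the units $\sum_{v\in E^0}[\chi_v]$, $\sum_{w\in F^0}[\chi_w]$; for $f\in MBPG$ the bijectivity of $f$ on vertices gives $f_*^C(\sum_{v\in E^0}[\chi_v])=\sum_{v\in E^0}[\chi_{f(v)}]=\sum_{w\in F^0}[\chi_w]$ exactly as in~\eqref{sumvex}, so $f_*^C$ is unital.
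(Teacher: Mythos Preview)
Your proposal is correct and follows essentially the same route as the paper: reduce to the path-algebra homomorphism $\bar{f}_*$ via Lemma~\ref{extlem} and Proposition~\ref{prop226}, then verify the Cohn relations survive by a telescoping computation governed by the monotonicity condition~\eqref{mcondition}, with functoriality and unitality inherited from the path-algebra level. The only cosmetic difference is organizational: the paper treats the vertex-image subcase of $e\neq e'$ separately (arguing it cannot occur), whereas you fold it into the single ``initial segment'' dichotomy, which is a slightly cleaner packaging of the same argument.
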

\begin{proof}
Combining Proposition~\ref{prop226} and Lemma~\ref{extlem} with the fact that $MIPG$ is a subcategory
of $IPG$, we conclude that the assignment 
\begin{align}
\mathrm{Obj}(MIPG)\ni E &\stackrel{}{\longmapsto}k\bar{E}\in\mathrm{Obj}(KA),\\ \nonumber
\mathrm{Mor}(MIPG)\ni (f\colon E\to F) &\stackrel{}{\longmapsto}(\bar{f}_*\colon k\bar{E}\to k\bar{F})\in\mathrm{Mor}(KA),\\
\forall\; p\in FP(\bar{E})\colon k\bar{E}\ni \chi_p &\stackrel{\bar{f}_*}{\longmapsto}\chi_{\bar{f}(p)}\in k\bar{F}\,,\nonumber
\end{align}
defines a covariant functor. Next, let us check that for any graph $E$ and $e\in E^1$ such that $f(e)$
is of positive length ($f(e)=:(f_1,\ldots,f_n)$) we have
\begin{align}
[\bar{f}_*(\chi_{e^*}\chi_{e})] &=[\chi_{\bar{f}(e^*e)}]
=
[\chi_{f(e)^*}][\chi_{f(e)}]
=
[\chi_{f_n^*}]\ldots[\chi_{f_1^*}][\chi_{f_1}]\ldots[\chi_{f_n}]
\\ \nonumber &=
[\chi_{f_n^*}]\ldots[\chi_{f_2^*}][\chi_{t_F(f_1)}][\chi_{f_2}]\ldots[\chi_{f_n}]
=
[\chi_{f_n^*}]\ldots[\chi_{f_2^*}][\chi_{f_2}]\ldots[\chi_{f_n}]
\\ \nonumber &=
[\chi_{t_F(f_n)}]
=
[\bar{f}_*(\chi_{t_E(e)})].
\end{align}
Note that, if $f(e)$ is a vertex, the above calculation is trivial. 

Assume now that $e,e'\in E^1$, $e\neq e'$ and $s_E(e)= s_E(e')$. 
Then, by the condition \eqref{mcondition}, we know that
$f(e)\not\preceq f(e')$ and $f(e')\not\preceq f(e)$. 
Hence, neither $f(e)$ nor $f(e')$ can be a vertex. Indeed, suppose that  $f(e)$ is a vertex. Then
\begin{equation}
f(e)=s_{PF}(f(e))=f(s_{PE}(e))=f(s_{PE}(e'))=s_{PF}(f(e')),
\end{equation}
so $f(e')=f(e)f(e')$ contradicting $f(e)\not\preceq f(e')$. The path $f(e')$ cannot be a vertex by the symmetric
argument. Consequently, both $f(e)$ and $f(e')$ are paths of positive length. 
Let $f(e)=:f_1\ldots f_n$ and $f(e')=:f'_1\ldots f'_{n'}$.
Now, the condition $f(e)\not\preceq f(e')$ and $f(e')\not\preceq f(e)$ implies that there exists an index
$i\leq\min\{n,n'\}$ such that $f_i\neq f'_i$.
It follows that
\begin{equation}
[\bar{f}_*(\chi_{e^*}\chi_{e'})] =[\chi_{\bar{f}(e^*e')}]
 =[\chi_{f(e)^*}][\chi_{f(e')}]=
[\chi_{f_n^*}]\ldots[\chi_{f_1^*}][\chi_{f'_1}]\ldots[\chi_{f'_{n'}}]=
0.
\end{equation}
Note that, if $s_E(e)\neq s_E(e')$, the above calculation is trivial.

Summarizng, we infer that $f_*^C$ is a~well defined
algebra homomorphism. The functoriality $(f\circ g)_*^C=f_*^C\circ g_*^C$ follows
immediately from the functoriality $\overline{(f\circ g)}_*=\bar{f}_*\circ \bar{g}_*$.
Finally, the unitality of $f^C_*$ for $f\in {\rm Mor}(MBPG)$ follows immediately from~\eqref{sumvex}. 
\end{proof}
\begin{example}\label{ex4.4}
{\rm Let 
$$
E:=\quad
\begin{tikzpicture}[auto,swap]
\tikzstyle{vertex}=[circle,fill=black,minimum size=3pt,inner sep=0pt]
\tikzstyle{edge}=[draw,->]
\tikzstyle{cycle1}=[draw,->,out=130, in=50, loop, distance=40pt]
\tikzstyle{cycle2}=[draw,->,out=130, in=50, loop, distance=70pt]
   
\node[vertex,label=below:$v$] (0) at (0,0) {};
%\node (1) at (0,1.25) {\vdots};
%\node[vertex,label=below:] (1) at (1,-1) {};
%\node[vertex,label=below:$v$] (2) at (-1,-1) {};

\path (0) edge[cycle1] node[above] {$e$} (0);
%\path (0) edge[edge] node {} (1);
%\path (0) edge[edge,bend right] node {} (1);
%\path (0) edge[edge] node {} (2);
%\path (0) edge[cycle2] node[above] {$e_n$} (0);
%\path (2) edge[edge] node {} (1);

\end{tikzpicture}\text{ and }\quad
F:=\quad
\begin{tikzpicture}[auto,swap]
\tikzstyle{vertex}=[circle,fill=black,minimum size=3pt,inner sep=0pt]
\tikzstyle{edge}=[draw,->]
\tikzstyle{cycle1}=[draw,->,out=130, in=50, loop, distance=40pt]
\tikzstyle{cycle2}=[draw,->,out=130, in=50, loop, distance=70pt]
   
\node[vertex,label=below:$v$] (0) at (0,0) {};
%\node (1) at (0,1.25) {\vdots};
%\node[vertex,label=below:] (1) at (1,-1) {};
%\node[vertex,label=below:$v$] (2) at (-1,-1) {};

%\path (0) edge[cycle1] node[above] {$e$} (0);
%\path (0) edge[edge] node {} (1);
%\path (0) edge[edge,bend right] node {} (1);
%\path (0) edge[edge] node {} (2);
%\path (0) edge[cycle2] node[above] {$e_n$} (0);
%\path (2) edge[edge] node {} (1);

\end{tikzpicture}.
$$
Then $f\colon FP(E)\to FP(F)$ must be the constant map. It satisfies the assumptions
of Proposition~\ref{cohncov} because there is only one vertex and only one edge in~$E$. 
Hence, we have a unital algebra homomorphism
\begin{equation}
f_*^C\colon C_k(E)\ni [\chi_e ]\longmapsto f_*^C([\chi_e ]):=[\chi_{f(e)}]=1\in C_k(F).
\end{equation}
After identifying $C_k(E)$ with the polynomial Toeplitz algebra $k\langle s,s^*\rangle/\langle s^*s-1\rangle$
by mapping $[\chi_e]$ to the generating isometry $s$ and $[\chi_{e^*}]$ to its adjoint $s^*$,
and identifying $C_k(F)$ with~$k$, we see that $f_*^C$ becomes the map given by
$\mathrm{ev}_1(s)=1$ and $\mathrm{ev}_1(s^*)=1$.}
\end{example}
Since two edges are equal $e=e'$   if and only if $e\preceq e'$,
the condition \eqref{mcondition} is equivalent to
\begin{equation}\label{newm}
f(e)\preceq f(e')\quad\Rightarrow\quad e\preceq e'\,.
\end{equation}
The same condition written for vertices is equivalent to $f$ being injective on vertices. Therefore, it is
tempting to assume \eqref{newm} for all paths. However, this does not work in the above example because
\begin{equation}
f(e)=v\preceq v=f(v)\quad\text{and}\quad e\not\preceq v\,.
\end{equation}
On the other hand, \eqref{mcondition} implies the injectivity of $f$ on edges, so one might be tempted
to weaken \eqref{mcondition} to the injectivity of $f$ on edges. However, this does not work in the following
example.
\begin{example}
{\rm
Let 
$$
E:=\quad
\begin{tikzpicture}[auto,swap]
\tikzstyle{vertex}=[circle,fill=black,minimum size=3pt,inner sep=0pt]
\tikzstyle{edge}=[draw,->]
\tikzstyle{cycle1}=[draw,->,out=130, in=50, loop, distance=40pt]
\tikzstyle{cycle2}=[draw,->,out=130, in=50, loop, distance=70pt]
   
\node[vertex,label=below:$v$] (0) at (0,0) {};
\node (1) at (0,1.25) {};
%\node[vertex,label=below:] (1) at (1,-1) {};
%\node[vertex,label=below:$v$] (2) at (-1,-1) {};

\path (0) edge[cycle1] node {$e_1$} (0);
%\path (0) edge[edge] node {} (1);
%\path (0) edge[edge,bend right] node {} (1);
%\path (0) edge[edge] node {} (2);
\path (0) edge[cycle2] node[above] {$e_2$} (0);
%\path (2) edge[edge] node {} (1);

\end{tikzpicture}\text{ and }\quad
F:=\quad
\begin{tikzpicture}[auto,swap]
\tikzstyle{vertex}=[circle,fill=black,minimum size=3pt,inner sep=0pt]
\tikzstyle{edge}=[draw,->]
\tikzstyle{cycle1}=[draw,->,out=130, in=50, loop, distance=40pt]
\tikzstyle{cycle2}=[draw,->,out=130, in=50, loop, distance=70pt]
   
\node[vertex,label=below:$v$] (0) at (0,0) {};
%\node (1) at (0,1.25) {\vdots};
%\node[vertex,label=below:] (1) at (1,-1) {};
%\node[vertex,label=below:$v$] (2) at (-1,-1) {};

\path (0) edge[cycle1] node[above] {$e$} (0);
%\path (0) edge[edge] node {} (1);
%\path (0) edge[edge,bend right] node {} (1);
%\path (0) edge[edge] node {} (2);
%\path (0) edge[cycle2] node[above] {$e_n$} (0);
%\path (2) edge[edge] node {} (1);

\end{tikzpicture}.
$$
Then the path homomorphism of graphs given by $f(e_2):=e$ and $f(e_1):=e^2$ is injective on edges and vertices,
but the map $f_*^C\colon C_k(E)\to C_k(F)$ is not well defined because
\begin{equation}
0=f_*^C([\chi_{e_2^*}][\chi_{e_1}])=[\chi_{\bar{f}(e_2^*e_1)}]=[\chi_{f(e_2)^*}][\chi_{f(e_1)}]=
[\chi_{e^*}][\chi_{e^2}]=[\chi_{e}]\neq0.
\end{equation}
%Here the last step follows from Corollary~\ref{injcor}. 
Consequently, by Proposition~\ref{cohncov},
the injectivity on edges does not imply~\eqref{mcondition}.}
\end{example}
\begin{example}
{\rm Let 
$$
E:=\quad
\begin{tikzpicture}[auto,swap]
\tikzstyle{vertex}=[circle,fill=black,minimum size=3pt,inner sep=0pt]
\tikzstyle{edge}=[draw,->]
\tikzstyle{cycle1}=[draw,->,out=130, in=50, loop, distance=40pt]
\tikzstyle{cycle2}=[draw,->,out=130, in=50, loop, distance=70pt]
   
\node[vertex,label=below:$v$] (0) at (0,0) {};
\node (1) at (0,1.25) {};
%\node[vertex,label=below:] (1) at (1,-1) {};
%\node[vertex,label=below:$v$] (2) at (-1,-1) {};

\path (0) edge[cycle1] node {$e_1$} (0);
%\path (0) edge[edge] node {} (1);
%\path (0) edge[edge,bend right] node {} (1);
%\path (0) edge[edge] node {} (2);
\path (0) edge[cycle2] node[above] {$e_2$} (0);
%\path (2) edge[edge] node {} (1);

\end{tikzpicture}\text{ and }\quad
F:=\quad
\begin{tikzpicture}[auto,swap]
\tikzstyle{vertex}=[circle,fill=black,minimum size=3pt,inner sep=0pt]
\tikzstyle{edge}=[draw,->]
\tikzstyle{cycle1}=[draw,->,out=130, in=50, loop, distance=40pt]
\tikzstyle{cycle2}=[draw,->,out=130, in=50, loop, distance=70pt]
   
\node[vertex,label=below:$v$] (0) at (0,0) {};
%\node (1) at (0,1.25) {\vdots};
%\node[vertex,label=below:] (1) at (1,-1) {};
%\node[vertex,label=below:$v$] (2) at (-1,-1) {};

%\path (0) edge[cycle1] node[above] {$e$} (0);
%\path (0) edge[edge] node {} (1);
%\path (0) edge[edge,bend right] node {} (1);
%\path (0) edge[edge] node {} (2);
%\path (0) edge[cycle2] node[above] {$e_n$} (0);
%\path (2) edge[edge] node {} (1);

\end{tikzpicture}.
$$
Then \eqref{mcondition} clearly fails for the constant map.}
\end{example}
\begin{example}
{\rm Let \begin{equation*}
E:=\quad\begin{tikzpicture}[auto,swap]
\tikzstyle{vertex}=[circle,fill=black,minimum size=3pt,inner sep=0pt]
\tikzstyle{edge}=[draw,->]
\tikzstyle{cycle1}=[draw,->,out=130, in=50, loop, distance=40pt]
\tikzstyle{cycle2}=[draw,->,out=100, in=30, loop, distance=40pt]
   
\node[vertex,label=below:$v$] (0) at (0,0) {};
\node[vertex,label=below:$w$] (1) at (1,0) {};
%\node[vertex] (2) at (2,0) {};

\path (0) edge[edge, bend right] node[below] {$e_2$} (1);
\path (0) edge[edge,bend left] node[above] {$e_1$} (1);
%\path (2) edge[edge] node[above] {$e_{i+1}$} (1);

\end{tikzpicture}\quad\text{and}\quad
F:=\quad\begin{tikzpicture}[auto,swap]
\tikzstyle{vertex}=[circle,fill=black,minimum size=3pt,inner sep=0pt]
\tikzstyle{edge}=[draw,->]
\tikzstyle{cycle1}=[draw,->,out=130, in=50, loop, distance=40pt]
\tikzstyle{cycle2}=[draw,->,out=100, in=30, loop, distance=40pt]
   
\node[vertex,label=below:$v$] (0) at (0,0) {};
\node[vertex,label=below:$w$] (1) at (1,0) {};
%\node[vertex] (2) at (2,0) {};

\path (0) edge[cycle1] node[above] {$e$} (0);
\path (0) edge[edge] node[above] {$g$} (1);
%\path (2) edge[edge] node[above] {$e_{i+1}$} (1);

\end{tikzpicture}.
\end{equation*}
Then the path homomorphism given by identity on vertices and by the formulas
$f(e_1):=g$ and $f(e_2):=eg$ is a morphism in $MIPG$, but
\begin{equation}
\bar{f}(e_1^*)=g^*\preceq g^*e^*=\bar{f}(e_2^*)\quad\text{and}\quad e_1^*\neq e_2^*\,,
\end{equation}
so $\bar{f}$ is not  a morphism in~$MIPG$.}
\end{example}

\subsection{Leavitt path algebras and graph C*-algebras}

Finally, we consider the covariant functoriality of the construction of the Leavitt path algebra.
\begin{theorem}\label{levcov}
The assignment
\begin{align*}
\mathrm{Obj}(RMIPG)\ni E &\stackrel{}{\longmapsto}L_k(E)\in\mathrm{Obj}(KA),\nonumber\\
\mathrm{Mor}(RMIPG)\ni (f\colon E\to F)
&\stackrel{}{\longmapsto}(f_*^L\colon L_k(E)\to L_k(F))\in\mathrm{Mor}(KA),\nonumber\\
\forall\; p\in FP(\bar{E})\colon L_k(E)\ni [\chi_p]&\stackrel{f_*^L}{\longmapsto}[\chi_{\bar{f}(p)}]\in L_k(F)\,,
\end{align*}
defines a covariant functor. 
Furthermore, the same assignment restricted to the subcategory $RMBPG$
yields a covariant functor to the category~$UKA$.
\end{theorem}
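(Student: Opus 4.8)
The plan is to construct $f_*^L$ by descending the Cohn-level map $f_*^C$ through the Leavitt quotients. Since $RMIPG$ is a subcategory of $MIPG$, Proposition~\ref{cohncov} already provides, for every morphism $f$ of $RMIPG$, a well-defined $k$-algebra homomorphism $f_*^C\colon C_k(E)\to C_k(F)$ acting on generators by $[\chi_p]\mapsto[\chi_{\bar f(p)}]$. Recalling that $L_k(E)=C_k^{\mathrm{reg}(E)}(E)$ is the quotient of $C_k(E)=C_k^{\emptyset}(E)$ by the ideal generated by the elements of Definition~\ref{relcohndef}(2), namely $\sum_{e\in s_E^{-1}(v)}\chi_e\chi_{e^*}-\chi_v$ for $v\in\mathrm{reg}(E)$, I would introduce the canonical surjections $\pi_E\colon C_k(E)\to L_k(E)$ and $\pi_F\colon C_k(F)\to L_k(F)$ and show that $\pi_F\circ f_*^C$ annihilates that ideal. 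It then factors uniquely through $\pi_E$, producing $f_*^L$ with $f_*^L([\chi_p])=[\chi_{\bar f(p)}]$, automatically an algebra homomorphism as a map induced on quotients.

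The entire content is therefore the single identity, for each $v\in\mathrm{reg}(E)$,
\[
\sum_{e\in s_E^{-1}(v)}[\chi_{f(e)}]\,[\chi_{f(e)^*}]=[\chi_{f(v)}]\qquad\text{in }L_k(F).
\]
Since regularity forces $f$ to be injective on $s_E^{-1}(v)$ (Definition~\ref{regularity}(1)(a)), the left-hand side is exactly $\sum_{p\in S}[\chi_p][\chi_{p^*}]$ with $S:=f(s_E^{-1}(v))$, so I must show that this sum collapses to $[\chi_{f(v)}]$. I would split according to Definition~\ref{regularity}: the generic case $v\in\mathrm{reg}(E)\setminus\mathrm{reg}_0(E)$ (or a $0$-regular $v$ satisfying the same condition), and the degenerate case $v\in\mathrm{reg}_0(E)$ with $S=\{f(v)\}$. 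The degenerate case is immediate: there $s_E^{-1}(v)=\{e\}$ with $f(e)=f(v)$ a vertex, so the sum is $[\chi_{f(v)}][\chi_{f(v)}]=[\chi_{f(v)}]$ by idempotency, needing no Cuntz--Krieger relation.

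The heart of the argument, and the step I expect to be the main obstacle, is the generic case. Here I would exploit the recursive description of $S$ recorded right after Definition~\ref{regularity}: $S$ is the leaf set of a finite tree rooted at $f(v)$ in which every internal vertex $w$ branches into the full edge-fan $s_F^{-1}(w)$. Concretely, writing $S_x:=\{q\in FP(F)\mid xq\in S\}$ for $x\in s_F^{-1}(f(v))$, completeness (Definition~\ref{regularity}(1)(b)(iii)) guarantees that each nontrivial $S_x$ is again a complete tree-set rooted at $t_F(x)$, while Definition~\ref{regularity}(1)(b)(ii) ensures no element of $S$ is a proper prefix of another, so $S$ decomposes disjointly by first edge. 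I would then prove
\[
\sum_{q\in S_x}[\chi_q][\chi_{q^*}]=[\chi_{t_F(x)}]
\]
by induction on the maximal length of paths in $S_x$: the base case $S_x=\{t_F(x)\}$ is idempotency, and the inductive step peels off the first edge and invokes the second Cuntz--Krieger relation at $t_F(x)$,
\[
\sum_{q\in S_x}[\chi_q][\chi_{q^*}]=\sum_{y\in s_F^{-1}(t_F(x))}[\chi_y]\Big(\sum_{q'\in S_{xy}}[\chi_{q'}][\chi_{(q')^*}]\Big)[\chi_{y^*}]=\sum_{y\in s_F^{-1}(t_F(x))}[\chi_y][\chi_{y^*}]=[\chi_{t_F(x)}].
\]
The point making each step legal is that every internal vertex of the tree is a \emph{regular} vertex of $F$ --- exactly the observation in the remark after Definition~\ref{regularity} and in the proof of Lemma~\ref{rlemma} --- so the relation is available there, while completeness supplies the full fan $s_F^{-1}(t_F(x))$ needed to collapse. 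One last application at the root $f(v)$, which is regular in $F$ by Lemma~\ref{rlemma}, yields $\sum_{p\in S}[\chi_p][\chi_{p^*}]=[\chi_{f(v)}]$, establishing the key identity.

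With $f_*^L$ well defined, covariance $(f\circ g)_*^L=f_*^L\circ g_*^L$ is inherited from the Cohn level, since $\overline{f\circ g}=\bar f\circ\bar g$ (Lemma~\ref{extlem}) already gives agreement on the generators $[\chi_p]$. Finally, for a morphism of $RMBPG$ the graphs have finitely many vertices and $f$ is bijective on them, so $L_k(E)$ and $L_k(F)$ are unital with units $\sum_{v\in E^0}[\chi_v]$ and $\sum_{w\in F^0}[\chi_w]$, and unitality of $f_*^L$ follows verbatim from the computation \eqref{sumvex}. The only genuinely new ingredient beyond Proposition~\ref{cohncov} is thus the tree-collapse identity, which is precisely where the regularity condition of Definition~\ref{regularity} gets consumed.
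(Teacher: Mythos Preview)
Your proof is correct and follows the same overall strategy as the paper: reduce to the Cohn-level map via Proposition~\ref{cohncov}, then verify the single remaining identity $\sum_{e\in s_E^{-1}(v)}[\chi_{f(e)}][\chi_{f(e)^*}]=[\chi_{f(v)}]$ by exploiting the regularity condition. The only organizational difference lies in the collapse argument for the generic case: you induct from the root outward, peeling off the first edge at each step, whereas the paper partitions $f(s_E^{-1}(v))$ by path length and collapses inward from the longest paths to the root. These are dual traversals of the same finite tree structure and consume exactly the same ingredients---completeness of the edge-fan at each internal vertex (Definition~\ref{regularity}(1)(b)(iii)) and regularity of that vertex so that the second Cuntz--Krieger relation is available---so neither approach buys anything the other does not.
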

\begin{proof}
Since one can view the Leavitt path algebras as quotients of Cohn path algebras, we infer from Proposition~\ref{cohncov} that it suffices to show that
\begin{equation}\label{reg}
\forall\;v\in\mathrm{reg}(E)\colon \sum_{e\in s_E^{-1}(v)}[\chi_{\bar{f}(ee^*)}]=[\chi_{\bar{f}(v)}].
\end{equation}
To this end, we need to use the regularity of $f$. For starters, let us observe that \eqref{reg} is easily satisfied
for any $v\in\mathrm{reg}_0(E)$. Indeed, if $f(e)=w\in F^0$, then 
\begin{equation}
[\chi_{\bar{f}(ee^*)}]=[\chi_w]=[\chi_{\bar{f}(v)}].
\end{equation}
If $f(e)=:p_1\ldots p_n$, $p_1\,,\ldots\,, p_n\in F^1$, is a loop whose all vertices emit exactly one edge, then we obtain
\begin{equation}
[\chi_{\bar{f}(ee^*)}]=[\chi_{p_1}]\ldots[\chi_{p_n}][\chi_{p_n^*}]\ldots[\chi_{p_1^*}] =[\chi_{\bar{f}(v)}].
\end{equation}

Now, let us consider $v\in\mathrm{reg}(E)\setminus\mathrm{reg}_0(E)$.
Then, by the bijectivity assumption, we have
\begin{equation}\label{2eq}
\sum_{e\in s_E^{-1}(v)}[\chi_{\bar{f}(ee^*)}]
=\sum_{e\in s_E^{-1}(v)}[\chi_{f(e)}][\chi_{f(e)^*}]
=\sum_{p_i\in f(s^{-1}_E(v))}[\chi_{p_i}][\chi_{p_i^*}].
\end{equation}
Next, let $n$ be the length of a longest path in~$f(s^{-1}_E(v))$. Then we can partition the set $f(s^{-1}_E(v))$ by path lengths
to obtain
\begin{align*}
\sum_{p_i\in f(s^{-1}_E(v))}[\chi_{p_i}][\chi_{p_i^*}]
&=
\sum_{j=1}^n\;\sum_{p_{i,j}\in f(s^{-1}_E(v))\cap FP_j(F)}[\chi_{p_{i,j}}][\chi_{p_{i,j}^*}]
\\ &=
\sum_{j=1}^{n-1}\left(\sum_{p_{i,j}\in f(s^{-1}_E(v))\cap FP_j(F)}[\chi_{p_{i,j}}][\chi_{p_{i,j}^*}]\right)
+\sum_{p_{i,n}\in f(s^{-1}_E(v))\cap FP_n(F)}[\chi_{p_{i,n}}][\chi_{p_{i,n}^*}].
\end{align*}
Writing $p_{i,n}=:q_{i,n-1}x_{i,n}$, where $x_{i,n}\in F^1$, we obtain
\begin{align}
\sum_{p_{i,n}\in f(s^{-1}_E(v))\cap FP_n(F)}[\chi_{p_{i,n}}][\chi_{p_{i,n}^*}]
&=
\sum_{q_{i,n-1}\in E_{f(v)}^{n-1}}\;\sum_{x_{i,n}\in s_F^{-1}(t_F(q_{i,n-1}))}
[\chi_{q_{i,n-1}}][\chi_{x_{i,n}}][\chi_{x_{i,n}^*}][\chi_{q_{i,n-1}^*}]
\\ \nonumber &=
\sum_{q_{i,n-1}\in E_{f(v)}^{n-1}}
[\chi_{q_{i,n-1}}][\chi_{q_{i,n-1}^*}].
\end{align}
Here $E_{f(v)}^{n-1}$ stands for all paths of length $n-1$ that where extended to paths of length $n$ 
in the construction of~$f(s^{-1}_E(v))$, and the first step follows from Definition~\ref{regularity}(1). Furthermore,
combining the sets $f(s^{-1}_E(v))\cap FP_{n-1}(F)$ and $E_{f(v)}^{n-1}$ we obtain all paths of length $n-1$ that 
are given by extending  paths of length $n-2$ by all possible edges.
Hence, using an analogous notation, we infer that
\begin{gather}
\sum_{p_{i,n-1}\in f(s^{-1}_E(v))\cap FP_{n-1}(F)}[\chi_{p_{i,n-1}}][\chi_{p_{i,n-1}^*}]
+\sum_{q_{i,n-1}\in E_{f(v)}^{n-1}}
[\chi_{q_{i,n-1}}][\chi_{q_{i,n-1}^*}]
\\ \nonumber
=\sum_{q_{i,n-2}\in E_{f(v)}^{n-2}}\;\sum_{x_{i,n-1}\in s_F^{-1}(t_F(q_{i,n-2}))}
[\chi_{q_{i,n-2}}][\chi_{x_{i,n-1}}][\chi_{x_{i,n-1}^*}][\chi_{q_{i,n-2}^*}]
\\ \nonumber
=\sum_{q_{i,n-2}\in E_{f(v)}^{n-2}}
[\chi_{q_{i,n-2}}][\chi_{q_{i,n-2}^*}].
\end{gather}
Iterating this procedure, we conclude that
\begin{align}
\sum_{p_i\in f(s^{-1}_E(v))}[\chi_{p_i}][\chi_{p_i^*}]&=
\sum_{p_{i,1}\in f(s^{-1}_E(v))\cap FP_1(F)}[\chi_{p_{i,1}}][\chi_{p_{i,1}^*}]+\sum_{q_{i,1}\in E^1_{f(v)}}[\chi_{q_{i,1}}][\chi_{q_{i,1}^*}]
\\ \nonumber
&=\sum_{e\in s_F^{-1}(f(v))}[\chi_{e}]
[\chi_{e^*}]=[\chi_{\bar{f}(v)}].
\end{align}
Combining this with \eqref{2eq}, we complete the proof.
\end{proof}

Let us end this section by extending Theorem~\ref{levcov} to graph C*-algebras. For starters, recall that for any
$*$-homomorphism 
$\phi:L_\mathbb{C}(E)\to L_\mathbb{C}(F)$ there exists a unique $*$-homomorphism
 \mbox{$\widetilde{\phi}:C^*(E)\to C^*(F)$} extending $\phi$ (e.g., see~\cite[Theorem~4.4]{at-11}). Now, we can claim:
\begin{corollary}\label{c*cov}
Let $C^*\!A$ denote the category of C*-algebras and $*$-homomorphisms. The assignment
\begin{align*}
\mathrm{Obj}(RMIPG)\ni E&\stackrel{}{\longmapsto}C^*(E)\in\mathrm{Obj}(C^*\!A),\nonumber\\
\mathrm{Mor}(RMIPG)\ni (f\colon E\to F)
&\stackrel{}{\longmapsto}(\widetilde{f_*^L}\colon C^*(E)\to C^*(F))\in\mathrm{Mor}(C^*\!A),\nonumber\\
\forall\; v\in E^0\colon C^*(E)\ni P_v&\stackrel{\widetilde{f_*^L}}{\longmapsto}P_{\bar{f}(v)}\in C^*(F)\,,\\
\forall\; e\in E^1\colon C^*(E)\ni S_e&\stackrel{\widetilde{f_*^L}}{\longmapsto}S_{\bar{f}(e)}\in C^*(F)\,,
\end{align*}
where $f_*^L$ is given by Theorem~\ref{levcov} (for $k=\mathbb{C}$), defines a covariant functor. 
Furthermore, the same assignment restricted to the subcategory  $RMBPG$
yields a covariant functor to the category of unital C*-algebras and unital $*$-homomorphisms.
\end{corollary}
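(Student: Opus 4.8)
The plan is to deduce everything from Theorem~\ref{levcov} together with the universal extension property of the graph C*-algebra recalled just above the statement, so that almost no new computation is required. First I would check that, for $k=\mathbb{C}$, the algebra homomorphism $f_*^L\colon L_\mathbb{C}(E)\to L_\mathbb{C}(F)$ provided by Theorem~\ref{levcov} is in fact a $*$-homomorphism. This reduces to verifying that $\bar f$ intertwines the path involution, namely $\bar f(p^*)=\bar f(p)^*$ for every $p\in FP(\bar E)$, which is immediate from the defining formula $\bar f(e^*)=f(e)^*$ of Lemma~\ref{extlem} together with the multiplicativity of $\bar f$; combined with the formulas for the involution on $L_\mathbb{C}(E)$, this gives $f_*^L([\chi_p]^*)=(f_*^L([\chi_p]))^*$. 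Once this is in hand, the recalled result (\cite[Theorem~4.4]{at-11}) applies directly to $\phi=f_*^L$ and produces a unique $*$-homomorphism $\widetilde{f_*^L}\colon C^*(E)\to C^*(F)$ extending it.

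Next I would pin down the action on generators. Identifying $[\chi_v]$ with $P_v$ and $[\chi_e]$ with $S_e$ inside $C^*(E)$, the extension satisfies $\widetilde{f_*^L}(P_v)=f_*^L([\chi_v])=[\chi_{\bar f(v)}]=P_{\bar f(v)}$ and $\widetilde{f_*^L}(S_e)=f_*^L([\chi_e])=[\chi_{\bar f(e)}]=S_{\bar f(e)}$, where the last equality unfolds $[\chi_{f(e)}]$ as the product $[\chi_{f_1}]\cdots[\chi_{f_n}]$ for $f(e)=f_1\cdots f_n$ and invokes the notation $S_p=S_{e_1}\cdots S_{e_n}$ fixed earlier. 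This confirms the displayed formulas in the statement.

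For functoriality I would argue by the uniqueness clause. Given composable morphisms $g\colon E\to F$ and $f\colon F\to G$ in $RMIPG$, both $\widetilde{(f\circ g)_*^L}$ and $\widetilde{f_*^L}\circ\widetilde{g_*^L}$ are $*$-homomorphisms $C^*(E)\to C^*(G)$. The former extends $(f\circ g)_*^L$ by construction; the latter restricts on $L_\mathbb{C}(E)$ to $f_*^L\circ g_*^L$, since $\widetilde{g_*^L}$ restricts to $g_*^L$ and $\widetilde{f_*^L}$ restricts to $f_*^L$. As $(f\circ g)_*^L=f_*^L\circ g_*^L$ by Theorem~\ref{levcov}, the two $*$-homomorphisms agree on the $*$-subalgebra $L_\mathbb{C}(E)$, hence coincide by uniqueness. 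The identity is handled the same way, $\widetilde{(\id_E)_*^L}=\id_{C^*(E)}$ being the unique extension of $\id_{L_\mathbb{C}(E)}$.

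Finally, for the restriction to $RMBPG$ I would note that when $E^0$ is finite the unit of $C^*(E)$ is $\sum_{v\in E^0}P_v$, so bijectivity of $f$ on vertices yields $\widetilde{f_*^L}\big(\sum_{v\in E^0}P_v\big)=\sum_{v\in E^0}P_{f(v)}=\sum_{w\in F^0}P_w$, exactly as in \eqref{sumvex}. Because the genuine work has already been carried out in Theorem~\ref{levcov}, I expect no serious obstacle in this corollary; the only point that truly requires care is the $*$-compatibility of $f_*^L$, after which the universal property and, crucially, its uniqueness clause render the extension, the functoriality, and the generator formulas essentially automatic.
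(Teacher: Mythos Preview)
Your proposal is correct and follows exactly the approach the paper intends: the paper gives no explicit proof of this corollary, merely recalling just before it that any $*$-homomorphism $L_\mathbb{C}(E)\to L_\mathbb{C}(F)$ extends uniquely to $C^*(E)\to C^*(F)$ via \cite[Theorem~4.4]{at-11}, and then stating the result as an immediate consequence of Theorem~\ref{levcov}. Your write-up simply fills in the details (the $*$-compatibility check via $\bar f(e^*)=f(e)^*$, the generator formulas, functoriality from uniqueness, and unitality from \eqref{sumvex}) that the paper leaves implicit.
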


\begin{example}{\rm Let
\begin{equation*}
E:=\quad\begin{tikzpicture}[auto,swap]
\tikzstyle{vertex}=[circle,fill=black,minimum size=3pt,inner sep=0pt]
\tikzstyle{edge}=[draw,->]
\tikzstyle{cycle1}=[draw,->,out=130, in=50, loop, distance=40pt]
\tikzstyle{cycle2}=[draw,->,out=100, in=30, loop, distance=40pt]
\node[vertex,label=below:$v$] (0) at (0,0) {};
\node[vertex,label=below:$w$] (1) at (1,0) {};

\path (0) edge[edge] node[above] {$e$} (1);

\end{tikzpicture}\quad\text{and}\quad
F:=\quad\begin{tikzpicture}[auto,swap]
\tikzstyle{vertex}=[circle,fill=black,minimum size=3pt,inner sep=0pt]
\tikzstyle{edge}=[draw,->]
\tikzstyle{cycle1}=[draw,->,out=130, in=50, loop, distance=40pt]
\tikzstyle{cycle2}=[draw,->,out=100, in=30, loop, distance=40pt]
   
\node[vertex,label=below:$v$] (0) at (0,0) {};
\node[vertex,label=below:$w$] (1) at (1,0) {};

\path (0) edge[edge, bend right] node[below] {$e_1$} (1);
\path (1) edge[edge,bend right] node[above] {$e_2$} (0);

\end{tikzpicture}.
\end{equation*}
Then, the path homomorphism of graphs given by $f(e):=e_1$ is clearly a morphism in the category RMBPG. One can easily generalize this example to a path 
homomorphism $f_n:E_{n-1}\to F_n$, $n\geq 3$, from a graph given by a straight path of length $n-1$ to a graph obtained by closing this path to a loop by 
adding an edge from the last to the first vertex. Furthermore, using the standard identifications $C^*(E_{n-1})\cong M_n(\mathbb{C})$ and 
$C^*(F_n)\cong M_n(\mathbb{C})\otimes C(S^1)$~(e.g., see~\cite[Example~2.14]{i-r05}), we infer that $f_n$ corresponds to the tensorial inclusion 
$M_n(\mathbb{C})\hookrightarrow M_n(\mathbb{C})\otimes C(S^1):x\mapsto x\otimes 1$ via Corollary~\ref{c*cov}.}
\end{example}

\section{Mixed-pullback theorem}
\noindent
In \cite[Theorem~3.4]{cht21}, the authors together with A.~Chirvasitu proved a  pullback theorem for graph C*-algebras (cf. pushout-to-pullback theorem~\cite{hrt20,ht22}). The theorem was motivated by 
examples coming from noncommutative topology such as the quantum CW complex structure of the standard Podle\'s quantum  sphere. Since the result 
involves both covariant and contravariant functors from certain categories of graphs, we call it a mixed-pullback theorem. Herein, using 
Corollary~\ref{c*cov}, we substantially
improve this result by allowing loops, and restricting the monotonicity condition from arbitrary finite paths to edges  (see~\eqref{mcondition}).
Thus, the quantum CW-complex structure of the quantum real projective plane (Example~\ref{qrp2example}) and the basic Example~\ref{ex4.4} are now 
within the scope of the theorem. To simplify notation, in this section, given a morphism $f:FP(E)\to FP(F)$ in the category {\rm RMIPG}, we write $f_*$ for the induced C*-algebra homomorphism given by Corollary~\ref{c*cov}.

First, we recall the notions of a saturated subset and an admissible inclusion. A subset $H\subseteq E^0$ is called {\em saturated} if there does not exist a 
regular vertex $v\in E^0\setminus H$ such that $t_E(s^{-1}_E(v))\subseteq H$. An injective graph homomorphism $(\pi^0,\pi^1):F\hookrightarrow E$ is called 
an {\em admissible inclusion} (cf.~\cite[Definition~3.1]{cht21}) if the following conditions are satisfied:
\begin{enumerate}
\item[(A1)] $E^0\setminus \pi^0(F^0)$ is saturated,
\item[(A2)] $t_E^{-1}(\pi^0(F^0))\subseteq \pi^1(F^1)$.
%\item[(A3)] no vertex in $E^0$ emits infinitely many edges into $E^0\setminus\pi^0(F^0)$ while emitting finitely many (but not zero) edges into $
%\pi^0(F^0)$.
\end{enumerate}
\begin{remark}
{\rm Note that in~\cite[Definition~3.1]{cht21} it is assumed that $E^0\setminus \pi^0(F^0)$ is hereditary and that we have the equality rather than the 
inclusion of sets in the condition (A2). However,  the reverse inclusion to the one in (A2) is automatically true for any graph homomorphism, and one can 
show that (A2) implies that $E^0\setminus \pi^0(F^0)$ is hereditary (e.g., see~\cite{ht22}).}
\end{remark}

Although the concept of admissibility allows the existence of breaking vertices, we need to control their behavior to formulate Theorem~\ref{main}.
Recall that, for $H\subseteq E^0$, we say that $v\in E^0\setminus H$ is a {\em breaking vertex} of $H$ whenever
\begin{equation}
|s^{-1}_E(v)|=\infty\qquad\text{and}\qquad 0<|s^{-1}_E(v)\cap t^{-1}_E(E^0\setminus H)|<\infty.
\end{equation}
By $B_H$ we denote the set of all breaking vertices of~$H$.
Note that a breaking vertex of $H$ becomes regular in the subgraph obtained by removing all vertices
in $H$ and all edges ending in~$H$.

Next, recall that, by~\cite[Corollary~3.5]{bhrsz-02},
if $(\pi^0,\pi^1):F\hookrightarrow E$ is an admissible inclusion, then there exists a surjective gauge-equivariant \mbox{$*$-homo}\-morphism $\pi^*:C^*(E)
\to C^*(F)$ given by 
\begin{equation}\label{admhom}
\pi^*(P_v)=\begin{cases} P_v & v\in\pi^0(F^0),\\0 & v\notin\pi^0(F^0),\end{cases}\quad v\in E^0,\qquad \pi^*(S_e)=\begin{cases} S_e & 
e\in\pi^1(F^1),\\ 0 & e\notin\pi^1(F^1),\end{cases}\quad e\in E^1,
\end{equation}
and whose kernel can be explicitely written as follows:
\begin{align}\label{kerpi}
\ker(\pi^*)&=
\Big{\langle}\{P_v~|~v\in E^0\setminus \pi^0(F^0)\}\cup\Big{\{}P_w-\hspace*{-12mm}\sum_{e\in \pi^1\big((\pi^0\circ s_F)^{-1}(w)\big)}
\hspace*{-12mm}S_eS_e^* \;\Big{|}\;
w\in B_{E^0\setminus \pi^0(F^0)}\Big{\}}\Big{\rangle}
\nonumber\\ &=
\overline{\mathrm{span}}\left\{S_\alpha S_\beta^*,\;S_\mu\Big(P_w-\hspace*{-12mm}\sum_{e\in \pi^1\big((\pi^0\circ s_F)^{-1}(w)\big)}
\hspace*{-12mm}S_eS_e^*\Big)S_\nu^*~\Big|~
\begin{matrix}
\alpha,\,\beta,\,\mu,\,\nu\in FP(E),\\
t(\alpha)=t(\beta) \in E^0\setminus \pi^0(F^0),  \\
t(\mu)=t(\nu)=w \in B_{E^0\setminus \pi^0(F^0)}\\
\end{matrix}
\right\}.
\end{align}

Finally, recall that a {\em vertex-simple loop} is a loop whose each vertex emits only one edge belonging to the loop. We say that a loop has an \emph{exit}
iff one of its vertices emits an edge not belonging to the loop.
We are now ready to prove the main result of this section,
 which generalizes~\cite[Theorem~3.4]{cht21}.
\begin{theorem}\label{main}
Let $(\pi^0_i,\pi^1_i):F_i\hookrightarrow E_i$, $i=1,2$, be admissible inclusions of graphs. Also, assume that $E_1$ is a graph whose every vertex-simple 
loop has an exit. Furthermore, let  \mbox{$f\colon E_1\to E_2$} be a morphism in the category $RMIPG$ mapping only breaking vertices to breaking vertices:
\begin{equation}\label{break}
f^{-1}(B_{E_2^0\setminus\pi_2^0(F_2^0)})\subseteq B_{E_1^0\setminus\pi_1^0(F_1^0)}
\end{equation}
and such that
\begin{equation}\label{comm}
f^{-1}(\pi_2^0(F_2^0))\subseteq \pi_1^0(F_1^0).
\end{equation}
Finally, assume that $f$ restricts to a morphism $f|:F_1\to F_2$ in the same category such that
\begin{equation}\label{regular}
f|\big((\pi^{0}_{1}\circ s_{F_1})^{-1}(B_{E_1^0\setminus\pi_1^0(F_1^0)})\big)\subseteq F^{1}_{2}\,.
\end{equation}
Then, if all the finite paths ending in $(E_2^0\setminus \pi^0_2(F_2^0))\cup B_{E_2^0\setminus\pi_2^0(F_2^0)}$ are in the image of $f$, 
the induced \mbox{$*$-homo}\-morphisms exist and render the diagram
\begin{equation}\label{pulldiag}
\xymatrix{
&
C^*(E_1) \ar[dl]_{\pi_1^*} \ar[dr]^{f_{*}}
&\\
C^*(F_1) \ar[dr]_{f|_{*}}
& & 
C^*(E_2) \ar[dl]^{\pi_2^*}
\\
&
C^*(F_2)
&
}
\end{equation}
a pullback diagram in the category of C*-algebras and $*$-homomorphisms.
\end{theorem}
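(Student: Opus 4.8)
The plan is to exhibit $C^*(E_1)$, together with the maps $\pi_1^*$ and $f_*$, as the pullback of the cospan $C^*(F_1)\xrightarrow{f|_{*}}C^*(F_2)\xleftarrow{\pi_2^*}C^*(E_2)$. Since both horizontal legs $\pi_1^*$ and $\pi_2^*$ are surjective by~\cite[Corollary~3.5]{bhrsz-02}, I would invoke the standard recognition criterion for pullbacks of C*-algebras (due to Pedersen): the commuting square~\eqref{pulldiag} is a pullback precisely when
\begin{enumerate}
\item[(i)] $\ker\pi_1^*\cap\ker f_*=0$, and
\item[(ii)] $f_*(\ker\pi_1^*)=\ker\pi_2^*$.
\end{enumerate}
Indeed, (i) is exactly injectivity of $(\pi_1^*,f_*)$ into the pullback algebra, while surjectivity onto the pullback follows by lifting the first coordinate through the surjection $\pi_1^*$ and correcting the second coordinate using (ii) together with commutativity. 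Thus the first task is to verify that the square commutes, i.e.\ $\pi_2^*\circ f_*=f|_{*}\circ\pi_1^*$, which I would check on the generators $P_v$ and $S_e$; here condition~\eqref{comm} guarantees that a vertex is deleted by $\pi_1^*$ if and only if its image is deleted by $\pi_2^*$, and admissibility~(A2) together with~\eqref{regular} handles the edge generators, whose images under $f_*$ are products along paths.

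Next I would establish the image condition (ii). The inclusion $f_*(\ker\pi_1^*)\subseteq\ker\pi_2^*$ is automatic from commutativity, since $\pi_2^*f_*=f|_{*}\pi_1^*$ vanishes on $\ker\pi_1^*$. For the reverse inclusion I would use the explicit description~\eqref{kerpi} of $\ker\pi_2^*$ as the closed span of elements $S_\alpha S_\beta^*$ with $t(\alpha)=t(\beta)\in E_2^0\setminus\pi_2^0(F_2^0)$ and of the breaking-vertex terms $S_\mu(P_w-\sum_e S_eS_e^*)S_\nu^*$. By hypothesis every finite path ending in $(E_2^0\setminus\pi_2^0(F_2^0))\cup B_{E_2^0\setminus\pi_2^0(F_2^0)}$ lies in the image of $f$, so each such $\alpha,\beta,\mu,\nu$ can be written as $f(\widetilde\alpha)$, and so on. Condition~\eqref{comm} forces the endpoints of these lifted paths to lie in $E_1^0\setminus\pi_1^0(F_1^0)$, whence $S_{\widetilde\alpha}S_{\widetilde\beta}^*\in\ker\pi_1^*$ maps to $S_\alpha S_\beta^*$; the breaking-vertex terms are lifted analogously, using~\eqref{break} to land at a breaking vertex of $E_1$ and~\eqref{regular} to match the gap projections. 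This exhibits every spanning element of $\ker\pi_2^*$ inside $f_*(\ker\pi_1^*)$, giving equality.

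The decisive step, and the main obstacle, is the injectivity condition (i): showing $f_*$ is injective on the ideal $\ker\pi_1^*$. I would realize $\ker\pi_1^*$ as the graph C*-algebra of the subgraph of $E_1$ supported on the saturated hereditary set $E_1^0\setminus\pi_1^0(F_1^0)$, suitably enlarged to account for the breaking vertices in $B_{E_1^0\setminus\pi_1^0(F_1^0)}$, as in~\cite{bhrsz-02}. Because this vertex set is hereditary, every vertex-simple loop lying in it keeps its exit inside the subgraph, so the hypothesis that all vertex-simple loops of $E_1$ have exits passes to the subgraph, which therefore satisfies condition~(L). Since $f_*$ is in general \emph{not} gauge-equivariant, the gauge-invariant uniqueness theorem is unavailable; instead I would apply the Cuntz--Krieger uniqueness theorem to the restriction $f_*|_{\ker\pi_1^*}$.

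For this it remains to check that $f_*$ does not annihilate any of the canonical projections generating the ideal: one has $f_*(P_v)=P_{f(v)}\neq0$ for deleted vertices $v$, while for a breaking vertex $w$ conditions~\eqref{break} and~\eqref{regular} ensure that $f(w)$ is again breaking and that $f_*$ sends the gap projection $P_w-\sum_e S_eS_e^*$ to the corresponding nonzero gap projection at $f(w)$. Verifying this nonvanishing on the gap projections, so that condition~(L) delivers injectivity of $f_*|_{\ker\pi_1^*}$, is precisely the technical core that replaces the acyclicity hypothesis of~\cite[Theorem~3.4]{cht21}. Once (i) and (ii) are in hand, the recognition criterion closes the argument and~\eqref{pulldiag} is a pullback.
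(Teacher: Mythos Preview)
Your overall architecture is exactly that of the paper: verify commutativity on generators, then invoke Pedersen's pullback criterion \cite[Proposition~3.1]{gk-p99}, using surjectivity of $\pi_1^*,\pi_2^*$ together with your (i) and (ii). Your treatment of (ii) --- lifting the spanning elements of $\ker\pi_2^*$ through the path-surjectivity hypothesis and matching them to elements of $\ker\pi_1^*$ via \eqref{comm}, \eqref{break}, and \eqref{regular} --- is essentially the paper's argument.

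Where you diverge is in step (i). The paper does something much simpler: since $E_1$ itself satisfies condition~(L) and $f_*(P_v)=P_{f(v)}\neq0$ for every $v\in E_1^0$, Szyma\'nski's general Cuntz--Krieger uniqueness theorem \cite[Corollary~1.3]{w-sz02} gives that $f_*$ is injective on \emph{all} of $C^*(E_1)$, not just on $\ker\pi_1^*$. This immediately yields (i) without ever identifying the ideal as a graph C*-algebra or touching gap projections. Your detour through the enlarged-subgraph description of $\ker\pi_1^*$ is viable in principle, but your justification for nonvanishing on gap projections misreads \eqref{break}: that inclusion says only that \emph{preimages} of $E_2$-breaking vertices are $E_1$-breaking, not that images of $E_1$-breaking vertices are breaking, so from $w\in B_{E_1^0\setminus\pi_1^0(F_1^0)}$ you cannot conclude $f(w)\in B_{E_2^0\setminus\pi_2^0(F_2^0)}$. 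The nonvanishing can in fact be recovered from monotonicity together with~\eqref{regular} (the infinitely many edges at $w$ leaving $\pi_1^0(F_1^0)$ must map to positive-length paths whose first edges are distinct from the single-edge images $f|(e)$, forcing $f(w)$ to emit into $E_2^0\setminus\pi_2^0(F_2^0)$), but the paper's global-injectivity argument sidesteps this entire analysis.
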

\begin{proof}
Since $(\pi_i^0,\pi^1_i):F_i\hookrightarrow E_i$, $i=1,2$, are admissible, the induced $*$-homomorphisms $\pi^*_i$, $i=1,2$, exist and are given 
by~\eqref{admhom}. Next, since $f$ and $f|$ are morphisms in the category $RMIPG$, the induced $*$-homomorphisms $f_*$ and $f|_*$ exist by 
Corollary~\ref{c*cov}. 

Let us check that~\eqref{pulldiag} is a~commutative diagram. Pick a vertex \mbox{$v\in f^{-1}(\pi_2^0(F_2^0))\subseteq E_1^0$}. It follows that
$\pi_2^*(f_*(P_v))=\pi_2^*(P_{f(v)})=P_{f(v)}$. Now, we conclude from  \eqref{comm} that   $\pi_1^*(P_v)=P_v$ and, 
consequently, $f|_*(\pi_1^*(P_v))=f|_*(P_v)=P_{f(v)}$. On the other hand, if $v\notin f^{-1}(\pi_2^0(F_2^0))$, then $\pi_2^*(f_*(P_v))=\pi_2^*(P_{f(v)})=0$. 
Furthermore, since $f$ restricts 
to $f|$, we infer the inclusion $f(\pi_1^0(F_1^0))\subseteq \pi_2^0(F_2^0)$. Hence,
$\pi_1^0(F^0_1)\subseteq  f^{-1}(\pi_2^0(F_2^0))$, so $\pi_1^*(P_v)=0$. Consequently, 
\begin{equation}
(f|_*\circ\pi_1^*)(P_v)=0=(\pi_2^*\circ f_*)(P_v).
\end{equation}
An analogous argument shows that $\pi_2^*\circ f_*$ agrees with $f|_*\circ\pi_1^*$ on the elements $\{S_e\}_{e\in E^1}$.

To prove that~\eqref{pulldiag} is a pullback diagram of C*-algebras, we check the conditions of~\cite[Proposition~3.1]{gk-p99}. First, observe that, since 
every vertex-simple loop of $E_1$ has an exit and $f_*(P_v)\neq 0$ for all $v\in E_1^0$, the $*$-homomorphism $f_*$ is injective 
(e.g., see~\cite[Corollary~1.3]{w-sz02}).
Combining this with surjectivity of $\pi^*_1$ and $\pi^*_2$ and the commutativity of the diagram~\eqref{pulldiag},
 we infer that it suffices to check whether
\begin{equation}
\ker(\pi_2^*)\subseteq f_*(\ker(\pi_1^*)).
\end{equation}
Next, since $f_*(\ker(\pi_1^*))$ is a closed vector space, we can reduce proving the above inclusion to showing that all spanning elements of both types 
appearing in the second line of \eqref{kerpi} are in~$f_*(\ker(\pi_1^*))$.

By assumption,  finite paths $\alpha$ and $\beta$ with $t_{PE_2}(\alpha)=t_{PE_2}(\beta)\in E_2^0\setminus \pi^0_2(F_2^0)$ 
are in the image of~$f$,
so we can write $\alpha\beta^*=f\big(\widetilde\alpha \widetilde\beta^*\big)$
and $S_\alpha S_\beta^*=f_*\big(S_{\widetilde\alpha \widetilde\beta^*}\big)$. Furthermore, as 
\begin{equation}
f(\pi_1^0(F_1^0))\subseteq \pi_2^0(F_2^0),
\end{equation}
we infer that $f|_*(\pi_1^*(P_{t_{PE_1}(\widetilde\alpha)}))=0$ by the commutativity of the diagram. 
However, since $f|_*$ is injective on vertex projections, we conclude that
$t_{PE_1}(\widetilde{\alpha})\notin \pi^0_1(F^0_1)$, so $\pi_1^*(S_{\widetilde{\alpha}})=0$.
Hence, $S_{\widetilde\alpha}\in \ker\pi_1^*$, as needed.

Also by assumption,  finite paths $\mu$ and $\nu$ with $t_{PE_2}(\mu)=t_{PE_2}(\nu)=w\in B_{E_2^0\setminus\pi_2^0(F_2^0)}$ 
are in the image of~$f$. In particular, there exists $u\in E^0_1$ such that $f(u)=w$. It follows from \eqref{break} that 
$u\in B_{E_1^0\setminus\pi_1^0(F_1^0)}$. 
The commutativity of the diagram combined with the regularity assumption for $f|$ and \eqref{regular} allows us to compute:
\begin{equation}
f_*\Big(P_u-\hspace*{-8mm}\sum_{y\in \pi^1_1\big((\pi^0_1\circ s_{F_1})^{-1}(u)\big)}
\hspace*{-8mm}S_yS_y^*\Big)\quad=\quad P_w-\hspace*{-10mm}
\sum_{x\in \pi^1_2\big((\pi^0_2\circ s_{F_2})^{-1}(w)\big)}
\hspace*{-8mm}S_xS_x^*
\quad\in C^*(E_2).
\end{equation}
Therefore, for finite paths $\widetilde\mu$ and $\widetilde\nu$ such that $f(\widetilde\mu)=\mu$ and $f(\widetilde\nu)=\nu$, we obtain
\begin{equation}
f_*\Big(S_{\widetilde\mu}\big(P_u-\hspace*{-8mm}\sum_{y\in \pi^1_1\big((\pi^0_1\circ s_{F_1})^{-1}(u)\big)}
\hspace*{-8mm}S_yS_y^*\big)S_{\widetilde\nu}^*\Big)\quad=\quad S_\mu\big(P_w-\hspace*{-10mm}
\sum_{x\in \pi^1_2\big((\pi^0_2\circ s_{F_2})^{-1}(w)\big)}
\hspace*{-8mm}S_xS_x^*\big)S_\nu^*
\end{equation}
and
\begin{equation}
\pi_1^*\Big(S_{\widetilde\mu}\big(P_u-\hspace*{-8mm}\sum_{y\in \pi^1_1\big((\pi^0_1\circ s_{F_1})^{-1}(u)\big)}
\hspace*{-8mm}S_yS_y^*\big)S_{\widetilde\nu}^*\Big)\;=\;
\pi_1^*(S_{\widetilde\mu})\pi_1^*\big(P_u-\hspace*{-8mm}\sum_{y\in \pi^1_1\big((\pi^0_1\circ s_{F_1})^{-1}(u)\big)}
\hspace*{-8mm}S_yS_y^*\big)\pi_1^*(S_{\widetilde\nu}^*)=0,
\end{equation}
as required. Here the last step follows from the regularity of $(\pi^0_1)^{-1}(u)$ and the second
 Cuntz--Krieger relation.
\end{proof}
\begin{example}\label{qrp2example}{\rm
Let $q\in [0,1]$. The C*-algebra $C(\mathbb{R}{\rm P}_q^2)$ \cite{hrs-03} 
of the $q$-deformed real projective plane 
$\mathbb{R}{\rm P}^2_q$ \cite{h-pm96} can be identified with the following subalgebra of the Toeplitz algebra 
$\mathcal{T}$:
\begin{equation}
\{t\in\mathcal{T}~|~\sigma(t)(-x)=\sigma(t)(x)~\text{for all $x\in S^1$}\}.
\end{equation}
Here $\sigma:\mathcal{T}\to C(S^1)$ is the symbol map. We define  graphs $E_1$ and $E_2$ as follows:
\begin{equation*}\vspace*{-2mm}
\begin{tikzpicture}[auto,swap]
\tikzstyle{vertex}=[circle,fill=black,minimum size=3pt,inner sep=0pt]
\tikzstyle{edge}=[draw,->]
\tikzstyle{cycle1}=[draw,->,out=130, in=50, loop, distance=40pt]
\tikzstyle{cycle2}=[draw,->,out=100, in=30, loop, distance=40pt]
   
\node[vertex, label=below:$v~$] (0) at (0,0) {};
\node[vertex, label=below:$~w$] (3) at (1,0) {}; 

\path (0) edge[cycle1] node[above] {$s$} (0);
\path (0) edge[edge,bend left] node[above] {~$r$} (3);
\path (0) edge[edge,bend right] node[below] {$t$} (3);
\end{tikzpicture}
\qquad \qquad 
\begin{tikzpicture}[auto,swap]
\tikzstyle{vertex}=[circle,fill=black,minimum size=3pt,inner sep=0pt]
\tikzstyle{edge}=[draw,->]
\tikzstyle{cycle1}=[draw,->,out=130, in=50, loop, distance=40pt]
\tikzstyle{cycle2}=[draw,->,out=100, in=30, loop, distance=40pt]
   
\node[vertex,label=below:$v$] (0) at (0,0) {};
\node[vertex,label=below:$w$] (1) at (1,0) {};

\path (0) edge[cycle1] node[above] {$e$} (0);
\path (0) edge[edge] node[above] {~$f$} (1);
\end{tikzpicture}
\end{equation*}
\begin{equation*}
\text{\small Figure: Graph $E_1$ (on the left) and graph $E_2$ (on the right).}\vspace*{2mm}
\end{equation*}
Both $C(\mathbb{R}{\rm P}_q^2)$ and $\mathcal{T}$ can be realized as graph C*-algebras of the graphs $E_1$ 
and $E_2$, respectively. (See \cite{hs02} for the graph-algebraic presentation of both complex and real
quantum projective spaces.) Now,
consider the $*$-homomorphism covariantly induced by the path homomorphism 
$\varphi:{\rm FP}(E_1)\to {\rm FP}(E_2)$ given by the assignment
\begin{equation}
v\longmapsto v,\quad w\longmapsto w,\quad s\longmapsto e^2,\quad r\longmapsto f,\qquad t\longmapsto ef,
\end{equation}
which exists by Corollary~\ref{c*cov}.
Then, by Theorem~\ref{main}, the diagram
\begin{equation}\label{qrp2}
\xymatrix{
&
C(\mathbb{R}{\rm P}_q^2) \ar[dl]_{\pi^*} \ar[dr]^{\varphi_{*}}
&\\
C(\mathbb{R}{\rm P}^1) \ar[dr]_{\varphi|_{*}}
& & 
\mathcal{T} \ar[dl]^{\sigma}
\\
&
C(S^1)
&
}
\end{equation}
is a pullback diagram in the category of C*-algebras and $*$-homomorphisms. Here $\pi:=(\pi^0,\pi^1)$
is an admissible inclusion of the graph with one edge and one vertex into $E_1$. 
Furthermore,
 one immediately sees that all $*$-homo\-mor\-phisms are unital.
Observe that this pullback diagram yields a quantum
CW-complex structure (in the sense of \cite{dhmsz-20}) of the $q$-deformed real projective plane.
Better still, due to recent work of Atul Gothe, Corollary~\ref{c*cov} and Theorem~\ref{main}
can be applied much in the same way to all even quantum real projective spaces to unravel their
quantum CW-complex structure. (The odd case is slightly more difficult, but there is work in progress.)
}
\end{example}

\section*{Acknowledgements} 
\noindent
We are happy to thank Alexander Frei for his inspiration concerning the regularity condition, Jack Spielberg for 
sharing his insights regarding the covariant functoriality of graph C*-algebras, and Gilles G. de Castro for pointing to us the condition~\eqref{comm}. Also, Mariusz Tobolski
appreciates the funding of his visits to the University of Warsaw provided by the Thematic Research Programme
``Quantum Symmetries''. Morevover, this work is part of the project ``Applications of graph algebras and higher-rank 
graph algebras in noncommutative geometry'' partially supported by NCN grant UMO-2021/41/B/ST1/03387.

\end{document}